\newtheorem{theorem}{Theorem}[section]
\newtheorem{lemma}[theorem]{Lemma}
\newtheorem{proposition}[theorem]{Proposition}
\newtheorem{corollary}[theorem]{Corollary}
\theoremstyle{definition}
\newtheorem{definition}[theorem]{Definition}
\newtheorem{example}[theorem]{Example}
\newtheorem{remark}[theorem]{Remark}
\definecolor{MyGray}{rgb}{0.96,0.97,0.98}
\newcommand{\ZZ}{\mathcal{Z}}
\newcommand{\OO}{\mathcal{O}}
\newcommand{\N}{\mathbb{N}}
\newcommand{\Z}{\mathbb{Z}}
\newcommand{\Q}{\mathbb{Q}}
\newcommand{\R}{\mathbb{R}}
\newcommand{\C}{\mathbb{C}}
\newcommand{\K}{\mathbb{K}}		
\newcommand{\Cuntz}[1]{\mathcal{O}_{#1}}
\newcommand{\Tor}[1]{{Tor}(#1)}
\newcommand{\Aut}[1]{{\rm Aut}(#1)}
\newcommand{\uAut}[1]{{\rm Aut}_0(#1)}
\newcommand{\id}[1]{{\rm id}_{#1}}
\begin{document}

\title{A Dixmier-Douady Theory for strongly self-absorbing $C^*$-algebras II: the Brauer group}
\author{Marius Dadarlat}\address{MD: Department of Mathematics, Purdue University, West Lafayette, IN 47907, USA}\email{mdd@math.purdue.edu}
\author{Ulrich Pennig}\address{UP: Mathematisches Institut, Westf\"alische Wilhelms-Universit\"at M\"unster, Ein\-stein\-stra\ss e 62, 48149 M\"unster, Germany}\email{u.pennig@uni-muenster.de}	
\begin{abstract}
 We have previously shown that the isomorphism classes of orientable locally trivial fields of $C^*$-algebras
 over a compact metrizable space $X$ with fiber $D\otimes \K$, where $D$ is a strongly self-absorbing $C^*$-algebra, form an abelian group under the operation of tensor product. Moreover this group
 is isomorphic to the first group $\bar{E}^1_D(X)$ of the (reduced) generalized cohomology theory associated
to the unit spectrum of topological K-theory with coefficients in $D$.
 Here we show that all the torsion elements of the group $\bar{E}^1_D(X)$ arise from locally trivial fields with fiber $D\otimes M_n(\C)$, $n\geq 1$, for all known examples
of strongly self-absorbing $C^*$-algebras $D$. Moreover the Brauer group
generated by locally trivial fields with fiber $D\otimes M_n(\C)$, $n\geq 1$
is isomorphic to $\Tor{\bar{E}^1_D(X)}$. \\[3mm]
{\it Keywords:} strongly self-absorbing, $C^*$-algebras, Dixmier-Douady class, Brauer group, torsion, opposite algebra \\
{\it MSC-classifier:} 46L80, 46L85, 46M20
\end{abstract}
\thanks{M.D. was partially supported by NSF grant \#DMS--1362824}
\thanks{U.P. was partially supported by the SFB 878 -- ``Groups, Geometry \& Actions''}

\maketitle

	\section{Introduction}
Let $X$ be a  compact metrizable space. Let $\K$ denote the $C^*$-algebra of compact operators
on an infinite dimensional separable Hilbert space. It is well-known that $\K\otimes \K \cong \K$ and $M_n(\C)\otimes \K\cong \K$.
Dixmier and Douady \cite{paper:DixmierDouady} showed that the isomorphism classes of locally trivial fields of $C^*$-algebras over $X$ with fiber $\K$ form
an abelian group under the operation of tensor product over $C(X)$ and this group is isomorphic to $H^3(X,\Z)$.
The torsion subgroup of $H^3(X,\Z)$ admits the following description.
Each element of $\Tor{H^3(X,\Z)}$ arises as the Dixmier-Douady class of a field $A$ which is isomorphic
to the stabilization $B\otimes \K$ of some locally trivial field of $C^*$-algebras $B$ over $X$ with all fibers isomorphic to
$M_n(\C)$ for some integer $n\geq 1$, see \cite{paper:Grothendieck}, \cite{paper:AtiyahSegal}.

In this paper we generalize this result to locally trivial fields with fiber $D\otimes \K$ where $D$ is a strongly self-absorbing $C^*$-algebra \cite{paper:TomsWinter}.
For a $C^*$-algebra $B$, we denote by  $\mathscr{C}_{B}(X)$ the isomorphism classes of locally trivial continuous fields of $C^*$-algebras over $X$ with fibers isomorphic to $B$.
The  isomorphism classes of orientable locally trivial continuous fields is denoted by $\mathscr{C}^0_{B}(X)$, see Definition~\ref{orientable}.
We have shown in \cite{paper:DadarlatP1} that $\mathscr{C}_{D\otimes \K}(X)$ is an abelian group under the operation of tensor product over $C(X)$, and moreover, this group is isomorphic to the first group $E^1_D(X)$ of a generalized cohomology theory $E_D^*(X)$ which we have proven to be isomorphic to the theory associated
to the unit spectrum of topological K-theory with coefficients in $D$, see \cite{paper:DadarlatP2}.
Similarly $(\mathscr{C}^0_{D\otimes \K}(X),\otimes)\cong \bar{E}^1_D(X)$ where $\bar{E}_D^*(X)$ is
the reduced theory associated to $E_D^*(X)$. For $D=\C$, we have, of course, $E^1_{\C}(X)\cong H^3(X,\Z)$.

We consider the stabilization map $\sigma:\mathscr{C}_{D\otimes M_n(\C)}(X)\to (\mathscr{C}_{D\otimes \K}(X),\otimes)\cong E^1_D(X)$ given by $[A]\mapsto [A\otimes \K]$ and show that its image consists entirely of torsion elements.
Moreover, if $D$ is any of the known strongly self-absorbing $C^*$-algebras, we show that the stabilization map
\[\sigma: \bigcup_{n\geq 1}\mathscr{C}_{D\otimes M_n(\C)}(X) \to \Tor{\bar{E}^1_D(X)}\]
is surjective, see Theorem~\ref{thm:applications}. In this situation $\mathscr{C}_{D\otimes M_n(\C)}(X)\cong \mathscr{C}^0_{D\otimes M_n(\C)}(X)$ by Lemma~\ref{orientable} and hence the image of the stabilization map
is contained in the reduced group $\bar{E}^1_D(X)$. 
In analogy with  the classic Brauer group generated by continuous fields of complex matrices $M_n(\C)$ \cite{paper:Grothendieck}, we introduce a Brauer group $Br_D(X)$ for locally trivial fields of C*-algebras with fibers $M_n(D)$ for $D$ a strongly self-absorbing $C^*$-algebra and establish an isomorphism $Br_D(X)\cong \Tor{\bar{E}^1_D(X)}$, see Theorem~\ref{thm:Brauer_Serre}.

Our proof is new even in the classic  case $D=\C$
whose original proof relies on an argument of Serre, see \cite[Thm.1.6]{paper:Grothendieck}, \cite[Prop.2.1]{paper:AtiyahSegal}. In the cases $D = \mathcal{Z}$ or $D = \Cuntz{\infty}$ the group $\bar{E}^1_D(X)$ is isomorphic to $H^1(X, BSU_{\otimes})$,  which appeared in \cite{paper:TelemanKofModuli}, where its equivariant counterpart played a central role.

We introduced in \cite{paper:DadarlatP1} characteristic classes
$$\delta_0: E^1_D(X)\to H^1(X, K_0(D)_{+}^{\times}) \quad \text{and}\quad  \delta_k: E^1_D(X)\to H^{2k+1}(X,\Q),\quad k\geq 1.$$
  If $X$ is connected, then $\bar{E}^1_D(X)=\mathrm{ker}(\delta_0)$.
 We show that an element $a$ belongs $\Tor{E^1_D(X)}$ if and only if $\delta_0(a)$ is a torsion element and  $\delta_k(a)=0$ for all $k\geq 1$.
 
 In the last part of the paper we show that if $A^{op}$ is the opposite C*-algebra of a locally trivial continuous field  $A$ with fiber $D\otimes \K$,
 then $\delta_{k}(A^{op})=(-1)^k\delta_{k}(A)$ for all $k\geq 0$. This shows that in general
 $A\otimes A^{op}$ is not isomorphic to a trivial field,  unlike what happens in the case $D=\C$.
 Similar arguments show that in general $[A^{op}]_{Br}\neq -[A]_{Br}$ in $Br_{D}(X)$ for $A\in \mathscr{C}_{D\otimes M_n(\C)}(X)$, see Example~\ref{remark:brauer_op}.
 
We would like to thank Ilan Hirshberg for prompting us to seek a refinement of Theorem~\ref{thm:applications} in the form of Theorem~\ref{thm:app_torsion}.

\section{Background and main result}
	
The class of strongly self-absorbing $C^*$-algebras was introduced by Toms and Winter \cite{paper:TomsWinter}.
They are separable unital $C^*$-algebras $D$ singled out by the property that there exists an isomorphism $D\to D\otimes D$
which is unitarily homotopic to the map $d\mapsto d\otimes 1_D$ \cite{paper:DadarlatKK1}, \cite{paper:WinterZStable}.

If $n\geq 2$ is a natural number we denote by $M_{n^\infty}$ the UHF-algebra $M_{n}(\C)^{\otimes \infty}.$
If $P$ is a nonempty set of primes, we denote by $M_{P^\infty}$ the UHF-algebra of infinite type
$\bigotimes_{p\in P} M_{p^\infty}.$
If $P$ is the set of all primes, then  $M_{P^\infty}$ is the universal UHF-algebra, which we denote by $M_{\Q}$.

The class  $\mathcal{D}_{pi}$ of all purely infinite strongly self-absorbing $C^*$-algebras that satisfy the Universal Coefficient Theorem
in KK-theory (UCT) was completely described in \cite{paper:TomsWinter}.
$\mathcal{D}_{pi}$ consists of the Cuntz algebras $\OO_2$, $\OO_\infty$ and of all $C^*$-algebras $M_{P^\infty}\otimes \OO_\infty$  with $P$ an arbitrary set of primes.
Let $\mathcal{D}_{qd}$ denote the class of strongly self-absorbing $C^*$-algebras which satisfy the UCT and which are
 quasidiagonal. A complete description of  $\mathcal{D}_{qd}$  has become possible
due to the recent results of Matui and Sato \cite[Cor.~6.2]{paper:Matui_Sato} that build on results of Winter~\cite{paper:WinterLocalizingElliott}, and  Lin and Niu \cite{paper:Lin_Niu}.
Thus $\mathcal{D}_{qd}$ consists of $\C$, the Jiang-Su algebra $\ZZ$ and all UHF-algebras $M_{P^\infty}$ with $P$ an arbitrary set of primes. The class $\mathcal{D}=\mathcal{D}_{qd}\cup \mathcal{D}_{pi}$ contains
all known examples of strongly self-absorbing $C^*$-algebras. It is closed under tensor products.
If $D$ is  strongly self-absorbing, then $K_0(D)$ is a unital commutative ring. The group of positive invertible elements of $K_0(D)$
is denoted by $K_0(D)_{+}^{\times}$.

Let $B$ be a  $C^*$-algebra. We denote by $\mathrm{Aut}_{0}(B)$ the path component of the identity of $\mathrm{Aut}(B)$ endowed with the point-norm topology.
Recall that we denote by $\mathscr{C}_{B}(X)$ the isomorphism classes of locally trivial continuous fields over $X$
with fibers isomorphic to $B$. The structure group of $A\in \mathscr{C}_{B}(X)$ is
$\mathrm{Aut}(B)$, and $A$ is in fact given by a principal $\mathrm{Aut}(B)$-bundle
which is  determined up to an isomorphism by an element of the homotopy classes of continuous maps from $X$ to the classifying space of the topological group $\mathrm{Aut}(B)$, denoted by $[X,B\mathrm{Aut}(B)]$.
\begin{definition}\label{orientable}
A  locally trivial continuous field $A$ of $C^*$-algebras with fiber $B$ is \emph{orientable} if its structure group can be reduced to $\mathrm{Aut}_{0}(B)$, in other words if $A$ is given by an element of $[X,B\mathrm{Aut}_{0}(B)]$.
\end{definition}
The corresponding isomorphism classes of orientable and locally trivial fields is denoted by $\mathscr{C}^0_{B}(X)$.
\begin{lemma} Let $D$ be a strongly self-absorbing $C^*$-algebra satisfying the UCT.
Then $\Aut{M_n(D)}=\uAut{M_n(D)}$ for all $n\geq 1$ and hence $\mathscr{C}_{D\otimes M_n(\C)}(X)\cong \mathscr{C}^0_{D\otimes M_n(\C)}(X)$.
\end{lemma}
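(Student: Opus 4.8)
The plan is to prove the group-level statement $\Aut{M_n(D)}=\uAut{M_n(D)}$; the isomorphism $\CC_{D\otimes M_n(\C)}(X)\cong\mathscr{C}^0_{D\otimes M_n(\C)}(X)$ is then immediate, since $M_n(D)=D\otimes M_n(\C)$ and, once $\Aut{M_n(D)}$ is point-norm path-connected, one has $B\Aut{M_n(D)}=B\uAut{M_n(D)}$, so that every field in $\CC_{D\otimes M_n(\C)}(X)=[X,B\Aut{M_n(D)}]$ is automatically orientable. Thus the whole content is to show that an arbitrary $\alpha\in\Aut{M_n(D)}$ can be joined to $\mathrm{id}$ by a point-norm continuous path of automorphisms.

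First I would pin down the action of $\alpha$ on $K$-theory. For every $D$ in the list $\mathcal{D}_{qd}\cup\mathcal{D}_{pi}$ one has $K_1(D)=0$, hence $K_1(M_n(D))=K_1(D)=0$, while $K_0(M_n(D))\cong K_0(D)$ is one of $0$, $\Z$, or a unital subring $\Z[P^{-1}]\subseteq\Q$. Since $\alpha$ induces an order automorphism of $K_0(M_n(D))$ fixing the class $[1_{M_n(D)}]=n\,[1_D]$ of the unit, and since every order automorphism of such a group is multiplication by a positive unit of the ring (hence trivial once it fixes a nonzero element), $\alpha_\ast=\mathrm{id}$ on $K_0$. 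With $K_1=0$ the Ext-term of the UCT sequence vanishes, so $KK(M_n(D),M_n(D))\cong\mathrm{Hom}(K_0(M_n(D)),K_0(M_n(D)))$ and therefore $[\alpha]=[\mathrm{id}]$ in $KK(M_n(D),M_n(D))$.

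Next I would feed this into a uniqueness theorem. The algebra $M_n(D)=D\otimes M_n(\C)$ is separable, unital, simple, $D$-absorbing and satisfies the UCT; it is a Kirchberg algebra when $D\in\mathcal{D}_{pi}$ and a stably finite classifiable algebra when $D\in\mathcal{D}_{qd}$. In either case the relevant classification/uniqueness theorem shows that a unital endomorphism of $M_n(D)$ is determined up to approximate unitary equivalence by its $KK$-class, so $[\alpha]=[\mathrm{id}]$ gives that $\alpha$ is approximately unitarily equivalent to $\mathrm{id}$. Since $K_1(M_n(D))=0$ and $M_n(D)$ has stable rank one (quasidiagonal case) or is purely infinite simple ($\mathcal{O}$-case), the unitary group $U(M_n(D))$ is connected; hence the implementing unitaries lie in $U_0(M_n(D))$ and $\Inn{M_n(D)}\subseteq\uAut{M_n(D)}$, because a path of unitaries from $1$ to $u$ yields a point-norm path from $\mathrm{id}$ to $\mathrm{Ad}(u)$.

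The main obstacle is the final upgrade: approximate unitary equivalence places $\alpha$ only in the point-norm closure of $\uAut{M_n(D)}$, and the identity component is not closed a priori. Here I would exploit the strongly self-absorbing structure through the identification $M_n(D)\cong M_n(D)\otimes D^{\otimes\infty}$ together with an Eilenberg-swindle/rotation argument, distributing the approximating unitaries over the infinite tensor factors and absorbing them into a shift to produce an honest point-norm continuous path from $\alpha$ to $\mathrm{id}$. In the purely infinite case this step can alternatively be read off directly from Kirchberg--Phillips, which classifies automorphisms up to \emph{homotopy}, not merely approximate unitary equivalence, by their $KK$-class. Combining the three steps gives $\alpha\in\uAut{M_n(D)}$, hence $\Aut{M_n(D)}=\uAut{M_n(D)}$, and the bundle statement follows.
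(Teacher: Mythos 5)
Your proposal is correct in substance for every $D$ in the known class $\mathcal{D}$, but it takes a genuinely different route from the paper, which avoids classification theory entirely. The paper's proof is structural: given $\beta\in\Aut{D\otimes M_n(\C)}$, it observes that $n[\beta(1_D\otimes e_{11})]=n[1_D\otimes e_{11}]$ in $K_0(D)$, uses torsion-freeness of $K_0(D)$ (Toms--Winter, under the UCT) and cancellation of full projections (from $\ZZ$-stability via Winter and R{\o}rdam) to get a partial isometry $v$ with $v^*v=1_D\otimes e_{11}$ and $vv^*=\beta(1_D\otimes e_{11})$, and then assembles from $v$ and the matrix units a unitary $u=\sum_i \beta(1_D\otimes e_{i1})v(1_D\otimes e_{1i})$ such that $u^*\beta u$ fixes $1_D\otimes M_n(\C)$ pointwise, hence $u^*\beta u=\alpha\otimes\id{M_n(\C)}$ for some $\alpha\in\Aut{D}$. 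Path-connectedness of $U(D\otimes M_n(\C))$ and of $\Aut{D}$ (Dadarlat--Winter) then connects $\beta=\mathrm{Ad}(u)\circ(\alpha\otimes\id{M_n(\C)})$ to the identity. What this buys over your approach: it is elementary modulo two quoted connectedness facts, and it proves the lemma in its \emph{stated} generality --- an arbitrary strongly self-absorbing $D$ satisfying the UCT --- since it never needs $D$ to lie on the known list. Your argument is explicitly case-by-case over $\mathcal{D}_{qd}\cup\mathcal{D}_{pi}$: in the stably finite case, ``classifiable'' was (at the time of the paper) only available for the quasidiagonal examples, so your proof covers all known examples but not the lemma as stated. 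This is a real, if mild, loss of generality rather than an error.

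The one step of yours I would press on is the upgrade from approximate unitary equivalence to an actual point-norm path, which you correctly flag as the main obstacle. The clean mechanism is \emph{asymptotic} unitary equivalence, not an ad hoc swindle: Kirchberg--Phillips gives $KK$-classification up to asymptotic unitary equivalence (indeed up to homotopy) in the purely infinite case, as you note, and Lin--Niu-type uniqueness theorems give the asymptotic version in the finite case; a norm-continuous path $(u_t)_{t\geq 0}$ with $\mathrm{Ad}(u_t)\to\alpha$ pointwise, together with $u_0\in U_0(M_n(D))=U(M_n(D))$ (using $K_1(D)=0$ and stable rank one, resp.\ pure infiniteness), immediately yields a point-norm path from $\mathrm{id}$ to $\alpha$. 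As written, ``distributing the approximating unitaries over the infinite tensor factors and absorbing them into a shift'' is a gesture toward the standard approximate-to-asymptotic intertwining rather than a proof, so you should either carry that out or cite the asymptotic uniqueness results directly. It is worth noting that the paper's citation for path-connectedness of $\Aut{D}$ encapsulates exactly this technology (automorphisms of strongly self-absorbing algebras are asymptotically inner), so in effect you re-derive at the level of $M_n(D)$ what the paper outsources at the level of $D$.
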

\begin{proof} First we show that for any $\beta\in \Aut{D \otimes M_n(\C)}$ there exist $\alpha\in \Aut{D}$
and a unitary $u\in D \otimes M_n(\C)$ such that $\beta=u(\alpha\otimes \id{M_n(\C)})u^*$.
Let $e_{11}\in M_n(\C)$ be the rank-one projection
that appears in the canonical matrix units $(e_{ij})$ of $M_n(\C)$ and let $1_n$ be the unit of $M_n(\C)$.
Then $n[1_D\otimes e_{11}]=[1_D\otimes 1_n]$ in $K_0(D)$ and hence $n[\beta(1_D\otimes e_{11})]=n[1_D\otimes e_{11}]$
in $K_0(D)$. Under the assumptions of the lemma, it is known that $K_0(D)$ is torsion free (by \cite{paper:TomsWinter}) and that $D$ has cancellation of full projections by \cite{paper:WinterZStable} and \cite{paper:Rordam-ZStable}. It follows that there is a partial isometry $v\in D \otimes M_n(\C)$
such that $v^*v=1_D\otimes e_{11}$ and $vv^*=\beta(1_D\otimes e_{11})$.
Then $u=\sum_{i=1}^n \beta (1_D\otimes e_{i1})v (1_D\otimes e_{1i})\in D \otimes M_n(\C)$ is a unitary
such that the automorphism $u^*\beta \,u$ acts identically on $1_D\otimes M_n(\C)$.
It follows that $u^*\beta \,u=\alpha\otimes \id{M_n(\C)}$ for some $\alpha\in \Aut{D}$.
Since both $U(D \otimes M_n(\C))$ and $\Aut{D}$ are path connected by \cite{paper:TomsWinter}, \cite{paper:Rordam-ZStable} and respectively \cite{paper:DadarlatKK1} we conclude that
$\Aut{D \otimes M_n(\C)}$ is path-connected as well.
\end{proof}

Let us recall the following results contained in Cor. 3.7, Thm. 3.8 and Cor. 3.9 from \cite{paper:DadarlatP1}.
Let $D$ be a strongly self-absorbing $C^*$-algebra.
\begin{itemize}
\item[(1)] The classifying spaces $B\mathrm{Aut}(D\otimes \K)$ and $B\mathrm{Aut}_{0}(D\otimes \K)$
are infinite loop spaces giving rise to generalized cohomology theories $E^*_D(X)$ and respectively
$\bar{E}^*_D(X)$.
\item[(2)] The monoid $(\mathscr{C}_{D\otimes \K}(X),\otimes)$ is an abelian group isomorphic
to $E^1_D(X)$. Similarly, the monoid $(\mathscr{C}^0_{D\otimes \K}(X),\otimes)$ is a group isomorphic
to $\bar{E}^1_D(X)$. In both cases the tensor product is understood to be over $C(X)$.
\item[(3)]  $E_{M_{\mathbb{Q}}}^{1}(X)  \cong H^1(X,\mathbb{Q}_{+}^\times)\oplus \bigoplus_{k \geq 1} H^{2k+1}(X,\mathbb{Q})$,\\
$E_{M_{\mathbb{Q}}\otimes \mathcal{O}_\infty}^{1}(X)  \cong H^1(X,\mathbb{Q}^\times)\oplus \bigoplus_{k \geq 1} H^{2k+1}(X,\mathbb{Q})$,
\item[(4)]  $\bar{E}_{M_{\mathbb{Q}}\otimes \mathcal{O}_\infty}^{1}(X) \cong \bar{E}_{M_{\mathbb{Q}}\otimes \mathcal{O}_\infty}^{1}(X) \cong  \bigoplus_{k \geq 1} H^{2k+1}(X,\mathbb{Q}).$
\item[(5)] If $D$ satisfies the UCT  then
$D\otimes M_{\mathbb{Q}}\otimes \mathcal{O}_\infty \cong M_{\mathbb{Q}}\otimes \mathcal{O}_\infty,$
by \cite{paper:TomsWinter}.
Therefore the tensor product operation $A\mapsto A \otimes M_{\mathbb{Q}}\otimes \mathcal{O}_\infty$ induces
maps $$\mathscr{C}_{D\otimes \K}(X)\to \mathscr{C}_{M_{\mathbb{Q}}\otimes \mathcal{O}_\infty \otimes \K}(X), \quad \mathscr{C}^0_{D\otimes \K}(X)\to \mathscr{C}^0_{M_{\mathbb{Q}}\otimes \mathcal{O}_\infty \otimes \K}(X)\quad \text{and hence maps}$$
$$ E^1_D(X)\stackrel{\delta}{\longrightarrow}  E^1_{M_{\mathbb{Q}}\otimes \mathcal{O}_\infty}(X)\cong H^1(X,\mathbb{Q}^\times)\oplus \bigoplus_{k \geq 1} H^{2k+1}(X,\mathbb{Q}),$$
$$\delta(A)=(\delta^s_0(A),\delta_1(A),\delta_2(A),\dots ), \quad \delta_k(A)\in  H^{2k+1}(X,\mathbb{Q}),$$

$$\bar{E}^1_D(X)\stackrel{\bar\delta}{\longrightarrow}\bar{E}^1_{M_{\mathbb{Q}}\otimes \mathcal{O}_\infty}(X)\cong  \bigoplus_{k \geq 1} H^{2k+1}(X,\mathbb{Q}),$$
$$\bar{\delta}(A)=(\delta_1(A),\delta_2(A),\dots ), \quad \delta_k(A)\in  H^{2k+1}(X,\mathbb{Q}).$$
\end{itemize}
The invariants $\delta_k(A)$  are called the rational characteristic classes of the continuous field $A$, see  \cite[Def.4.6]{paper:DadarlatP1}.
The first class $\delta^s_0$ lifts to a map $\delta_0: E^1_D(X)\to H^1(X,K_0(D)_+^\times)$  induced by the morphism of groups $\Aut{D\otimes \K}\to \pi_0(\Aut{D\otimes \K})\cong K_0(D)_+^\times$. $\delta_0(A)$  represents the obstruction to reducing the structure group of $A$ to $\uAut{D\otimes \K}$.

\begin{proposition}  A continuous field $A\in \mathscr{C}_{D\otimes \K}(X)$ is orientable if and only if $\delta_0(A)=0$. If $X$ is connected, then $\bar{E}^1_D(X)\cong \mathrm{ker}(\delta_0)$.
\end{proposition}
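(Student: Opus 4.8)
The plan is to derive both statements from the fibration of classifying spaces attached to the extension of topological groups
\[
1 \to \uAut{D\otimes\K} \to \Aut{D\otimes\K} \xrightarrow{q} \pi_0(\Aut{D\otimes\K}) \to 1 ,
\]
in which the quotient is the discrete abelian group $\pi_0(\Aut{D\otimes\K}) \cong K_0(D)_{+}^{\times}$ and $\uAut{D\otimes\K}$ is its kernel. Applying the classifying space functor gives a fibration
\[
B\uAut{D\otimes\K} \xrightarrow{i} B\Aut{D\otimes\K} \xrightarrow{p} K(K_0(D)_{+}^{\times}, 1),
\]
whose fibre and total space are, by (1)--(2), the infinite loop spaces representing $\bar{E}^{*}_D$ and $E^{*}_D$, whose base represents $H^{*}(-, K_0(D)_{+}^{\times})$, and where $p$ is the map inducing $\delta_0$. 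Granting that $p$ is an infinite loop map, so that this fibration underlies a fibre sequence of spectra, one obtains a long exact sequence of abelian groups
\[
E^{0}_D(X) \xrightarrow{\theta} H^{0}(X, K_0(D)_{+}^{\times}) \xrightarrow{\partial} \bar{E}^{1}_D(X) \xrightarrow{i_*} E^{1}_D(X) \xrightarrow{\delta_0} H^{1}(X, K_0(D)_{+}^{\times}),
\]
in which, by (2), $i_*$ is the natural map $\mathscr{C}^0_{D\otimes\K}(X) \to \mathscr{C}_{D\otimes\K}(X)$ and $\theta$ is the degree-zero component of the transformation induced by $p$.

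For the first assertion, a field $A \in \mathscr{C}_{D\otimes\K}(X)$ is orientable exactly when its classifying map factors through the fibre $i$, i.e.\ when $[A] \in \mathrm{im}(i_*)$; by exactness at $E^{1}_D(X)$ this is equivalent to $\delta_0(A) = 0$. (Alternatively, and without the spectrum-level input, the classifying map $f\colon X \to B\Aut{D\otimes\K}$ lifts through $i$ if and only if $p \circ f$ is null-homotopic, and $[p\circ f] = \delta_0(A)$ in $[X, K(K_0(D)_{+}^{\times},1)] = H^{1}(X, K_0(D)_{+}^{\times})$.)

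For the second assertion I would show that $i_*$ is injective when $X$ is connected, so that it restricts to an isomorphism $\bar{E}^{1}_D(X) \xrightarrow{\cong} \ker(\delta_0)$. By exactness at $\bar{E}^{1}_D(X)$ it suffices to prove $\partial = 0$, and by exactness at $H^{0}(X, K_0(D)_{+}^{\times})$ it is enough to see that $\theta$ is surjective. Since $X$ is connected and the coefficient group is discrete, $H^{0}(X, K_0(D)_{+}^{\times}) \cong K_0(D)_{+}^{\times}$ and restriction along any inclusion $\mathrm{pt} \hookrightarrow X$ is an isomorphism. On a point $\theta$ is the identification $E^{0}_D(\mathrm{pt}) = \pi_0(\Aut{D\otimes\K}) \cong K_0(D)_{+}^{\times}$ induced by $q$; naturality with respect to the maps $X \to \mathrm{pt}$ and $\mathrm{pt} \hookrightarrow X$ (whose composite is $\id{\mathrm{pt}}$) then forces $\theta$ to be split surjective over $X$, whence $\partial = 0$ and $\ker(i_*) = 0$.

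The main obstacle is the verification flagged in the first paragraph: that the fibration of classifying spaces genuinely refines to a fibre sequence of spectra, so that the Puppe sequence becomes a long exact sequence of abelian groups rather than merely one of pointed sets with a group action. Once this is granted, both the identification $\mathrm{im}(i_*) = \ker(\delta_0)$ and the vanishing of the connecting homomorphism for connected $X$ are formal. As a consistency check one can test the computation (3) for $D = \QQ$, where $\delta_0$ is the split projection onto $H^{1}(X, \mathbb{Q}_{+}^{\times})$ and indeed $\ker(\delta_0) = \bigoplus_{k\ge 1} H^{2k+1}(X, \mathbb{Q}) \cong \bar{E}^{1}_{\QQ}(X)$.
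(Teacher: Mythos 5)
Your proof is correct in substance, and for the first assertion it coincides with the paper's own argument: the paper likewise passes from the extension $1\to \uAut{D\otimes \K}\to \Aut{D\otimes \K}\to K_0(D)^{\times}_{+}\to 1$ to the homotopy fibre sequence of classifying spaces and reads off the exact sequence of pointed sets $[X,K_0(D)^{\times}_{+}]\to [X,B\uAut{D\otimes \K}]\to [X,B\Aut{D\otimes \K}]\stackrel{\delta_0}{\longrightarrow} H^1(X,K_0(D)^{\times}_{+})$; your parenthetical ``alternative'' via the lifting criterion is exactly this. Where you genuinely diverge is the second assertion: the paper never promotes the fibration to a fibre sequence of spectra. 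It only notes that, $K_0(D)^{\times}_{+}$ being discrete and $X$ connected, $[X,K_0(D)^{\times}_{+}]=\{*\}$, so $i_*$ has trivial kernel in the pointed-set sense; since $i_*$ is a homomorphism between the groups of item (2) (or, alternatively, since $[X,G/H]$ acts on $[X,BH]$ with orbits the fibres of $i_*$), injectivity follows, and exactness identifies the image with $\mathrm{ker}(\delta_0)$. Your route instead hinges on the input you flag --- that $p$ is an infinite loop map. That input is true (it follows from the identification of the spectrum with the unit spectrum $gl_1(KU^D)$ in \cite{paper:DadarlatP2}, where $p$ is the zeroth Postnikov truncation), but it is strictly unnecessary here, and your surjectivity-of-$\theta$ argument via restriction to a point does the same job as the one-line observation $[X,K_0(D)^{\times}_{+}]=\{*\}$. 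So: your proof is sound modulo the flagged verification, but the space-level argument of the paper gets both statements with none of the spectrum-level machinery.
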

\begin{proof}
Let us recall from \cite[Cor.~2.19]{paper:DadarlatP1} that
 there is an exact sequence of
topological groups
\begin{equation}\label{eqn:basic} 1\to \mathrm{Aut}_{0}(D\otimes \K)\to \mathrm{Aut}(D\otimes \K)\stackrel{\pi}{\longrightarrow} K_0(D)^{\times}_{+}\to 1.\end{equation}
The map $\pi$ takes an automorphism $\alpha$ to $[\alpha(1_D\otimes e)]$ where $e\in \K$ is a rank-one projection.
If $G$ is a topological group and $H$ is a normal subgroup of $G$ such that
$H\to G \to G/H$  is a principal $H$-bundle,    then there is a homotopy fibre sequence
$G/H\to  BH \to BG \to B(G/H)$ and hence an exact sequence of pointed sets
$[X,G/H]\to [X,BH] \to [X,BG] \to [X,B(G/H)]$.
In particular, in the case of the fibration \eqref{eqn:basic} we obtain
\begin{equation}\label{eqn:basic+}
[X,K_0(D)^{\times}_{+}]\to [X,B\mathrm{Aut}_{0}(D\otimes \K)] \to [X,B\mathrm{Aut}(D\otimes \K)] \stackrel{\delta_0}{\longrightarrow} H^1(X, K_0(D)^{\times}_{+}).\end{equation}
   A continuous field $A\in \mathscr{C}^0_{D\otimes \K}(X)$ is associated to a principal $\mathrm{Aut}(D\otimes \K)$-bundle  whose classifying map gives a unique element in $[X,B\mathrm{Aut}(D\otimes \K)]$ whose image
in $H^1(X, K_0(D)^{\times}_{+})$ is denoted by $\delta_0(A)$.
It is clear from \eqref{eqn:basic+} that the class $\delta_0(A)\in H^1(X, K_0(D)^{\times}_{+})$ represents the obstruction for reducing this bundle
to a principal $\mathrm{Aut}_{0}(D\otimes \K)$-bundle.
If $X$ is connected, $[X,K_0(D)^{\times}_{+}]=\{*\}$ and hence $\bar{E}^1_D(X)\cong \mathrm{ker}(\delta_0)$.
\end{proof}
\begin{remark}
 If $D=\C$ or $D=\ZZ$ then $A$ is automatically orientable since in those cases $K_0(D)^{\times}_{+}$ is the trivial group.\end{remark}

\begin{remark} \label{rem:delta_suspension}
Let $Y$ be a compact metrizable space and let $X = \Sigma Y$ be the suspension of $Y$. Since the rational K\"unneth isomorphism and the Chern character on $K^0(X)$ are compatible with the ring structure on $K_0(C(Y) \otimes D)$, we obtain a ring homomorphism
\[
	{\rm ch} \colon K_0(C(Y) \otimes D) \to K^0(Y) \otimes K_0(D) \otimes \Q \to \prod_{k = 0}^{\infty} H^{2k}(Y, \Q) =: H^{\rm ev}(Y,\Q)\ ,
\]
which restricts to a group homomorphism ${\rm ch} \colon \bar{E}_D^0(Y) \to SL_1(H^{\rm ev}(Y,\Q))$, where the right hand side denotes the units, which project to $1 \in H^0(Y, \Q)$. If $A$ is an orientable locally trivial continuous field with fiber $D \otimes \K$ over $X$, then we have
\begin{equation} \label{eqn:delta_susp}
	\delta_k(A) = \log {\rm ch}(f_A) \in H^{2k}(Y, \Q) \cong H^{2k+1}(X, \Q)\ ,
\end{equation}
where $f_A \colon Y \to \Omega B\uAut{D \otimes \K} \simeq \uAut{D \otimes \K}$ is induced by the transition map of $A$. The homomorphism ${\rm log} \colon SL_1(H^{\rm ev}(Y, \Q)) \to H^{\rm ev}(Y, \Q)$ is the rational logarithm from \cite[Section 2.5]{paper:Rezk}. For the proof of (\ref{eqn:delta_susp}) it suffices to treat the case $D = M_{\Q} \otimes \Cuntz{\infty}$, where it can be easily checked on the level of homotopy groups, but since $\bar{E}^0_D(Y)$ and $H^{\rm ev}(Y,\Q)$ have rational vector spaces as coefficients this is enough.
\end{remark}

{
\begin{lemma}\label{lemma:corners}
Let $D$ be a strongly self-absorbing $C^*$-algebra in the class $\mathcal{D}$. If $p\in D\otimes \K$ is a projection such that $[p]\neq 0$ in $K_0(D)$, then there is an integer $n\geq 1$ such that $[p] \in nK_0(D)^{\times}_{+}$. If $[p] \in nK_0(D)^{\times}_{+}$, then $p(D\otimes \K)p\cong M_{n}(D)$. Moreover, if $n,m \geq 1$, then $M_n(D)\cong M_m(D)$ if and only if $n K_0(D)_{+}^{\times}=mK_0(D)_{+}^{\times}$.
\end{lemma}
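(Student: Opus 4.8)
The plan is to reduce the whole statement to the structure of $K_0(D)$ as an ordered ring. Using the description of $\mathcal{D}$ recalled above, the K\"unneth theorem and the vanishing of $K_1$ for every member of $\mathcal{D}$, one has $K_0(\C)=K_0(\ZZ)=K_0(\Cuntz{\infty})=\Z$, $K_0(M_{P^\infty})=K_0(M_{P^\infty}\otimes\Cuntz{\infty})=\Z[P^{-1}]$ and $K_0(\Cuntz{2})=0$; thus $K_0(D)$ is always a subring $R\subseteq\Q$, say $R=\Z[S^{-1}]$ for a set of primes $S$. The case $D=\Cuntz{2}$ is degenerate: $K_0(D)=0$ makes the first assertion vacuous, while $M_n(\Cuntz{2})\cong\Cuntz{2}$ and every corner $p(\Cuntz{2}\otimes\K)p$ is a unital Kirchberg algebra with vanishing $K$-theory, hence $\cong\Cuntz{2}$ by Kirchberg--Phillips, so the remaining assertions hold trivially. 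I therefore assume $R\neq 0$. For the first assertion I would write the nonzero class $[p]\in R$ as $[p]=\pm a/d$ with $a\geq 1$ an integer and $d\geq 1$ a product of primes in $S$, and put $n:=a$ and $u:=[p]/n$. Then $u$ is a unit of $R$, and since $n\,u=[p]\in K_0(D)^{+}$ with $n\geq 1$, and the order on $K_0(D)$ is either the restriction of the usual order on $\Q$ or trivial, also $u\in K_0(D)^{+}$; hence $u\in K_0(D)^{\times}_{+}$ and $[p]\in nK_0(D)^{\times}_{+}$.

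The second assertion is the heart of the matter, and the idea is that a positive unit of $K_0(D)$ can be absorbed by an automorphism of $D\otimes\K$. Let $e_{11}\in M_n(\C)\subseteq\K$ be a minimal projection and set $q_n:=1_D\otimes 1_n$, so that $q_n(D\otimes\K)q_n=M_n(D)$ and $[q_n]=n$. Given $[p]=n\,u$ with $u\in K_0(D)^{\times}_{+}$, the surjectivity of $\pi$ in the exact sequence \eqref{eqn:basic} yields $\phi\in\Aut{D\otimes\K}$ with $\pi(\phi)=[\phi(1_D\otimes e_{11})]=u$; summing over the $n$ mutually orthogonal, mutually equivalent subprojections of $q_n$ gives $[\phi(q_n)]=n\,u=[p]$. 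Since $D$ is simple, $D\otimes\K$ is simple and the nonzero projections $p$ and $\phi(q_n)$ are full, so by cancellation of full projections they are Murray--von Neumann equivalent: there is $w$ with $w^*w=\phi(q_n)$ and $ww^*=p$. Conjugation $y\mapsto wyw^*$ then identifies $\phi(q_n)(D\otimes\K)\phi(q_n)\cong p(D\otimes\K)p$, and since $\phi$ restricts to an isomorphism $M_n(D)=q_n(D\otimes\K)q_n\to\phi(q_n)(D\otimes\K)\phi(q_n)$, composition gives $p(D\otimes\K)p\cong M_n(D)$.

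For the third assertion, the implication from right to left is immediate from the second applied to $p=q_n$: if $nK_0(D)^{\times}_{+}=mK_0(D)^{\times}_{+}$ then $[q_n]=n\in mK_0(D)^{\times}_{+}$, whence $M_n(D)=q_n(D\otimes\K)q_n\cong M_m(D)$. Conversely, an isomorphism $M_n(D)\cong M_m(D)$ is unital and induces an isomorphism of ordered $K_0$-groups taking order unit to order unit; under the identifications $K_0(M_n(D))\cong K_0(M_m(D))\cong R$ coming from the corner embedding $d\mapsto d\otimes e_{11}$, the order units become $n$ and $m$, so I obtain an order automorphism $\theta$ of $R$ with $\theta(n)=m$. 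Every group automorphism of $R\subseteq\Q$ is multiplication by the unit $t=\theta(1)\in R^{\times}$, and since $\theta$ preserves the positive cone and $1\in K_0(D)^{+}$ the unit $t$ lies in $K_0(D)^{\times}_{+}$; then $m=\theta(n)=tn\in nK_0(D)^{\times}_{+}$, and $nK_0(D)^{\times}_{+}=mK_0(D)^{\times}_{+}$ follows.

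I expect the main obstacle to be the second assertion. That the positive unit $u$ disappears on passing to the corner is not a formal $K$-theoretic fact but rests on the special feature of strongly self-absorbing algebras that every positive unit of $K_0(D)$ is realized as $\pi(\phi)$ for some $\phi\in\Aut{D\otimes\K}$, used together with simplicity and cancellation of full projections to replace $p$ by the standard projection $\phi(q_n)$. A secondary point will be to keep the positivity statements uniform across the stably finite and purely infinite members of $\mathcal{D}$, whose $K_0$-groups carry different orders; phrasing everything in terms of the positive cone of $K_0(D)$, rather than the order inherited from $\Q$, handles both at once.
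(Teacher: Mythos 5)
Your proof is correct, and it uses the same two main ingredients as the paper's --- the explicit computation of $K_0(D)\subseteq \Q$ together with its positive cone and unit group, and cancellation of full projections --- but it deploys them differently at two points. For the corner isomorphism $p(D\otimes\K)p\cong M_n(D)$, the paper picks a projection $q$ with $[q]\in K_0(D)^{\times}_{+}$, writes $[p]=n[q]=[\mathrm{diag}(q,\dots,q)]$, and invokes \cite[Lemma~2.14]{paper:DadarlatP1} (namely $[q]\in K_0(D)^{\times}_{+}$ if and only if $q(D\otimes\K)q\cong D$); you instead realize the unit $u$ by an automorphism $\phi$ via the surjection $\pi$ in \eqref{eqn:basic} and transport the standard corner $q_n(D\otimes\K)q_n=M_n(D)$. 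These mechanisms are equivalent (setting $q:=\phi(1_D\otimes e_{11})$ recovers the paper's $q$), and your trick is in fact precisely the one the paper itself uses for the converse direction of the third assertion. For the forward direction of the third assertion, the paper again applies \cite[Lemma~2.14]{paper:DadarlatP1} to $\alpha(1_D\otimes e)$ to conclude $\alpha_*[1_D]\in K_0(D)^{\times}_{+}$, an argument valid for an arbitrary strongly self-absorbing $D$ with cancellation; your route --- every additive automorphism $\theta$ of a subgroup $R\subseteq\Q$ is multiplication by $t=\theta(1)\in R^{\times}$, with $t\in K_0(D)^{\times}_{+}$ because $\theta$ preserves the positive cone --- is more elementary but genuinely tied to $K_0(D)\subseteq\Q$, i.e.\ to $D\in\mathcal{D}$; since that is exactly what the hypothesis grants, and since the first assertion forces the explicit computation in both treatments anyway, nothing is lost. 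Your derivation of the right-to-left implication of the third assertion directly from the already-proved second one (applied to $p=1_D\otimes 1_n$) is tidier than the paper's, which re-runs the automorphism argument and leaves the final cancellation step implicit. Finally, your separate Kirchberg--Phillips treatment of $\Cuntz{2}$ is harmless but not needed: your general argument covers it, since $K_0(\Cuntz{2})^{\times}_{+}=\{0\}$ and nonzero projections in $\Cuntz{2}\otimes\K$ with equal (zero) class are Murray--von Neumann equivalent --- though note that both your statement and the paper's implicitly assume $p\neq 0$ in this degenerate case.
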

}
\begin{proof} Recall that $K_0(D)$ is an ordered unital ring with unit $[1_D]$ and with positive elements $K_0(D)_{+}$ corresponding to classes of projections in $D\otimes \K$. The group of invertible elements is denoted by $K_0(D)^{\times}$
and $K_0(D)^{\times}_{+}$ consists of classes $[p]$ of projections
 $p\in D\otimes \K$ such that $[p]\in K_0(D)^{\times}$. It was shown in \cite[Lemma~2.14]{paper:DadarlatP1}
that if $p\in D\otimes \K$ is a projection, then $[p]\in K_0(D)^{\times}_{+}$ if and only if $p(D\otimes \K)p\cong D$. The ring $K_0(D)$ and the group $K_0(D)^{\times}_{+}$ are known for all $D\in \mathcal{D}$, \cite{paper:TomsWinter}.
In fact $K_0(D)$ is a unital subring of $\mathbb{Q}$, $K_0(D)_{+}= \Q_+\cap K_0(D)$ if $D\in \mathcal{D}_{qd}$
and $K_0(D)_{+}=K_0(D)$ if $D\in \mathcal{D}_{pi}.$
Moreover:
\begin{itemize}
\item[] $K_0(\C)\cong K_0(\ZZ)\cong K_0(\mathcal{O}_\infty)\cong \Z$, $K_0(\mathcal{O}_2)=\{0\}$,

\item[]  $K_0(M_{P^\infty})\cong K_0(M_{P^\infty}\otimes \OO_\infty) \cong \Z[1/P]\cong\bigotimes\limits_{p\in P} \Z[1/p]\cong \{np_1^{k_1}p_2^{k_2}\cdots p_r^{k_r}\colon p_i\in P, n, k_i\in \Z\},$

\item[]   $K_0(\C)_{+}^{\times}\cong K_0(\ZZ)_{+}^{\times}=\{1\}$, $K_0(\mathcal{O}_\infty)_{+}^{\times}=\{\pm 1\}$,

\item[]  $K_0(M_{P^{\infty}})_{+}^{\times}\cong \{p_1^{k_1}p_2^{k_2}\cdots p_r^{k_r}\colon p_i\in P, k_i\in \Z\}$.
\item[]  $K_0(M_{P^{\infty}}\otimes \OO_\infty)_{+}^{\times}\cong \{\pm p_1^{k_1}p_2^{k_2}\cdots p_r^{k_r}\colon p_i\in P, k_i\in \Z\}$.
\end{itemize}
In particular, we see  that in all cases $K_0(D)_{+}=\N\cdot K_0(D)^{\times}_{+}$, 
{which proves the first statement. If $p\in D\otimes \K$ is a projection such that $[p] \in nK_0(D)^{\times}_{+}$, then there is a projection $q \in  D\otimes \K$ such that $[q]\in K_0(D)^{\times}_{+}$ and $[p]=n[q]=[\mathrm{diag}(q,q,\dots,q)]$. Since $D$ has cancellation of full projections, it follows then immediately that $p(D\otimes \K)p\cong M_n(D)$ proving the second part.}

To show the last part of the lemma, suppose now that $\alpha:D\otimes M_n(\C)\to D\otimes M_m(\C)$ is a $*$-isomorphism.
Let $e\in M_n(\C)$ be a rank one projection.
Then $\alpha(1_D\otimes e) (D\otimes M_m(\C))\alpha(1_D\otimes e) \cong D$.
By \cite[Lemma~2.14]{paper:DadarlatP1} it follows that $\alpha_*[1_D]=[\alpha(1_D\otimes e)]\in K_0(D)_{+}^{\times}$. Since $\alpha$ is unital, $\alpha_*(n[1_D])=m[1_D]$ and hence $m[1_D]\in n K_0(D)_{+}^{\times}$. This is equivalent to $n K_0(D)_{+}^{\times}=mK_0(D)_{+}^{\times}$.

Conversely, suppose that $m[1_D]=n u$ for some $u\in  K_0(D)_{+}^{\times}$.
Let $\alpha\in \Aut{D\otimes \K}$ be such that $[\alpha(1_D\otimes e)]=u$.
Then $\alpha_*(n[1_D])=nu=m[1_D]$. This implies
that $\alpha$ maps a corner of $D\otimes\K$ that is isomorphic to $M_n(D)$ to a corner that is isomorphic
to $M_m(D)$.
\end{proof}

{
\begin{corollary} \label{cor:preimage_units}
Let $D\in \mathcal{D}$ and let $\theta \colon D\otimes M_{n^r}(\C) \to D \otimes M_{n^{\infty}}$ be a unital inclusion induced by some unital embedding $M_{n^r}(\C)\to M_{n^{\infty}}$, where  $n\geq 2, r\geq 0$.
Let $R$ be the set of prime factors of $n$.
 Then, under the canonical isomorphism $ K_0(D\otimes M_{n^r}(\C)) \cong K_0(D)$, we have
\[
	\theta_*^{-1}(K_0(D \otimes M_{n^{\infty}})^{\times}_+) = \bigcup_r rK_0(D)^{\times}_+ \subset K_0(D)
\] where $r$ runs through the set of all products of the form $\prod_{q\in R}q^{k_q}$, $k_q\in \N \cup\{0\}$.
\end{corollary}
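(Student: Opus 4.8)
The plan is to push the entire computation down to the level of $K$-theory, where every ring in sight is a subring of $\Q$, and then to finish with an elementary argument about prime factorizations. First I would analyse the homomorphism $\theta_*$. Writing $e_{11}\in M_{n^r}(\C)$ for a rank-one projection, the canonical isomorphism $K_0(D\otimes M_{n^r}(\C))\cong K_0(D)$ is the Morita isomorphism sending $[1_D\otimes e_{11}]$ to $[1_D]$. Since $\theta$ is unital we have $\theta_*[1_D\otimes 1_{n^r}]=[1_{D\otimes M_{n^{\infty}}}]$, and because $n^r[1_D\otimes e_{11}]=[1_D\otimes 1_{n^r}]$ in $K_0$, this forces $\theta_*[1_D\otimes e_{11}]=n^{-r}$ once we identify $K_0(D\otimes M_{n^{\infty}})$ with the subring $K_0(D)[1/n]\subset\Q$ (using that $D\otimes M_{n^{\infty}}\in\mathcal{D}$ has prime set $P\cup R$). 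As any additive homomorphism between subgroups of $(\Q,+)$ is multiplication by its value at $1$, transporting $\theta_*$ along the canonical isomorphism yields exactly the map $K_0(D)\to K_0(D\otimes M_{n^{\infty}})$ given by multiplication by $n^{-r}$.

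Next I would observe that $n$, and hence $n^{\pm r}$, is a positive invertible element of $K_0(D\otimes M_{n^{\infty}})$, i.e.\ $n^{\pm r}\in K_0(D\otimes M_{n^{\infty}})^{\times}_{+}$. Since this set is a group under multiplication, multiplying it by $n^{-r}$ leaves it unchanged, and therefore
\[
	\theta_*^{-1}\bigl(K_0(D\otimes M_{n^{\infty}})^{\times}_{+}\bigr)=K_0(D)\cap K_0(D\otimes M_{n^{\infty}})^{\times}_{+}
\]
as subsets of $K_0(D)$. This expression no longer depends on $r$, as the right-hand side of the corollary demands. (The same conclusion is reached regardless of whether the canonical isomorphism produces multiplication by $n^{-r}$ or by $n^{r}$, since both are units of the target.)

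It then remains to identify $K_0(D)\cap K_0(D\otimes M_{n^{\infty}})^{\times}_{+}$ with $\bigcup_{s\in S}s\,K_0(D)^{\times}_{+}$, where $S=\{\prod_{q\in R}q^{k_q}:k_q\ge 0\}$ is the monoid appearing (under the dummy name $r$) on the right-hand side of the corollary. Using Lemma~\ref{lemma:corners} I would write $K_0(D)=\Z[1/P]$ and $K_0(D\otimes M_{n^{\infty}})=\Z[1/(P\cup R)]$, their positive unit groups being the multiplicative groups generated by the relevant primes, together with $-1$ when $D\in\mathcal{D}_{pi}$; the case $D=\OO_2$ is trivial as $K_0(D)=0$. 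The inclusion $\supseteq$ is immediate, since for $s\in S$ and $u\in K_0(D)^{\times}_{+}$ the product $su$ lies in $K_0(D)$ and is a positive unit of $\Z[1/(P\cup R)]$. For the reverse inclusion, an element $x\in K_0(D)\cap K_0(D\otimes M_{n^{\infty}})^{\times}_{+}$ has the form $\pm\prod_{p\in P\cup R}p^{a_p}$ with $a_p\in\Z$; the key point is that membership in $\Z[1/P]$ forces $a_q\ge 0$ for every prime $q\in R\setminus P$. Collecting those factors into $s=\prod_{q\in R\setminus P}q^{a_q}\in S$ and the remaining factors, carrying the sign, into $u\in K_0(D)^{\times}_{+}$ gives $x=su$.

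The main obstacle is bookkeeping rather than anything deep: one must treat correctly the primes lying in both $P$ and $R$, which may occur with negative exponent but can be absorbed into $u$ because they are already invertible in $K_0(D)$, and one must keep track of the sign in the purely infinite case. The conceptual heart is the observation of the second paragraph that the power of $n$ introduced by $\theta_*$ is invisible to the group of positive units, which is precisely what makes the final answer independent of $r$.
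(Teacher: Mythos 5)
Your proof is correct and follows essentially the same route as the paper: identify $K_0(D)\cong\Z[1/P]$ and $K_0(D\otimes M_{n^{\infty}})\cong\Z[1/(P\cup R)]$ via Lemma~\ref{lemma:corners}, then finish by factoring an element of $\Z[1/P]$ into primes and observing that the exponents of primes in $R\setminus P$ are forced to be nonnegative, with the sign absorbed into $K_0(D)^{\times}_{+}$ in the purely infinite case. Your explicit computation that the transported map is multiplication by the positive unit $n^{-r}$, hence invisible to the preimage of the positive unit group, is in fact slightly more careful than the paper's proof, which simply declares $\theta_*$ to be the canonical inclusion $\Z[1/P]\hookrightarrow\Z[1/S]$ and separately treats the cases $R\subseteq P$ and $R\nsubseteq P$, a case split your uniform argument avoids.
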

}
{
\begin{proof} \label{pf:preimage_units}
From Lemma~\ref{lemma:corners} we see that $K_0(D) \cong \Z[1/P]$ for a (possibly empty) set of primes $P$.  The order structure is the one induced by $(\Q, \Q_+)$ if $D$ is  quasidiagonal or $K_0(D)^{+} = \Z[1/P]$ if $D$ is purely infinite. If $R \subseteq P$, then $\theta$ induces an isomorphism on $K_0$ and the statement is true, since $\theta_*$ is order preserving and $\Z[1/R]^{\times} \subseteq K_0(D)^{\times}$. Thus, we may assume that $ R\nsubseteq P$. Let $S= P \cup R$ and thus $K_0(D \otimes M_{n^{\infty}})\cong \Z[1/S]$. The map $\theta_*$ induces the canonical  inclusion $\Z[1/P] \hookrightarrow \Z[1/S]$. We can write $x \in \Z[1/P]$ as 
\[
	x = m \cdot \prod_{p \in P} p^{r_p} \cdot \prod_{q\in R\setminus P}q^{k_q} 
\]
with $m \in \Z$ relatively prime to all $p \in P$ and $q \in R$, only finitely many $r_p \in \Z$ non-zero and 
$k_q\in \N\cup \{0\}$. From this decomposition we see that $x$ is invertible in $\Z[1/S]$ if and only if $m=\pm 1$. This concludes the proof since $p^{r_p}\in K_0(D)^{\times}_{+}$.
\end{proof}
}
{
\begin{remark}
Let $q \in D \otimes \K$ be a projection and let $\alpha \in \Aut{D \otimes \K}$. As in \cite[Lemma 2.14]{paper:DadarlatP1} we have that $[\alpha(q)] = [\alpha(1 \otimes e)] \cdot [q]$ with $[\alpha(1 \otimes e)] \in K_0(D)^{\times}_{+}$. Thus, the condition $[q] \in nK_0(D)^{\times}_{+}$ for $n \in \N$ is invariant under the action of $\Aut{D \otimes \K}$ on $K_0(D)$. Given $A \in \mathscr{C}_{D \otimes \K}(X)$, a projection $p \in A$, $x_0 \in X$ and an isomorphism $\phi \colon A(x_0) \to D \otimes \K$ the condition $[\phi(p(x_0))] \in nK_0(D)^{\times}_{+}$ is independent of $\phi$. Abusing the notation we will write this as $[p(x_0)] \in nK_0(D)^{\times}_{+}$. 
\end{remark}
}

\begin{corollary}\label{cor:global_n}  Let $D\in \mathcal{D}$ and let $A\in \mathscr{C}_{D\otimes \K}(X)$ with $X$ a connected compact metrizable space. {If $p \in A$ is a projection such that $[p(x_0)]\in nK_0(D)^{\times}_{+}$ for some point $x_0$, then $(p A p)(x) \cong M_n(D)$ for all $x\in X$ and hence $pAp\in \mathscr{C}_{D\otimes M_n(\C)}(X)$. If $p \in A$ is a projection with $[p(x_0)] \in K_0(D) \setminus \{0\}$, then $[p(x_0)] \in nK_0(D)^{\times}_{+}$ for some $n \in \N$.}
\end{corollary}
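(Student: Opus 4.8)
\emph{Plan.} The second assertion is immediate. Since $A(x_0)\cong D\otimes\K$, the class $[p(x_0)]$ corresponds to a nonzero element of $K_0(D)$, and the first statement of Lemma~\ref{lemma:corners} supplies an integer $n\geq 1$ with $[p(x_0)]\in nK_0(D)^{\times}_{+}$. So I would dispose of this part in one line and concentrate on the first assertion.

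For the first assertion the plan is to show that the set $Z=\{x\in X : [p(x)]\in nK_0(D)^{\times}_{+}\}$ is clopen and nonempty, whence $Z=X$ by connectedness. The first thing I would stress is \emph{why $Z$ is even well-defined}: the class $[p(x)]$ is not canonically an element of $K_0(D)$, because the fiberwise identifications $A(x)\cong D\otimes\K$ are only determined up to the $\Aut{D\otimes\K}$-action. By the preceding Remark, any two such identifications differ by an automorphism $\alpha$, which multiplies $[p(x)]$ by the unit $[\alpha(1\otimes e)]\in K_0(D)^{\times}_{+}$; since $u\cdot nK_0(D)^{\times}_{+}=nK_0(D)^{\times}_{+}$ for every $u\in K_0(D)^{\times}_{+}$, the membership condition defining $Z$ is invariant under this action and hence genuinely trivialization-independent.

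To see that $Z$ is clopen I would argue locally. Fix $x_1\in X$ and a trivializing neighborhood $U$, so that under $A|_U\cong C(U)\otimes(D\otimes\K)$ the section $p|_U$ becomes a norm-continuous family of projections $x\mapsto p_x$ in $D\otimes\K$. For $x$ near $x_1$ we have $\norm{p_x-p_{x_1}}<1$, so $z_x=p_xp_{x_1}+(1-p_x)(1-p_{x_1})$ is invertible and $u_x=z_x(z_x^*z_x)^{-1/2}$ is a norm-continuous family of unitaries in the unitization with $u_{x_1}=1$ and $u_xp_{x_1}u_x^*=p_x$. In particular $[p_x]=[p_{x_1}]$ in $K_0(D)$ throughout this neighborhood, so the condition defining $Z$ is locally constant; combined with the invariance just noted, $Z$ is both open and closed. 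Since $x_0\in Z$ and $X$ is connected, $Z=X$, and then the second statement of Lemma~\ref{lemma:corners} gives $(pAp)(x)\cong M_n(D)$ for every $x$. Finally, the same unitaries $u_x$ conjugate $p_{x_1}(D\otimes\K)p_{x_1}$ to $p_x(D\otimes\K)p_x$ continuously, furnishing a local trivialization of $pAp$ over $U$ with fiber $p_{x_1}(D\otimes\K)p_{x_1}\cong M_n(D)\cong D\otimes M_n(\C)$; hence $pAp\in\mathscr{C}_{D\otimes M_n(\C)}(X)$.

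The main obstacle is exactly the well-definedness issue flagged in the second paragraph: because the fiberwise $K_0$-class is only defined up to multiplication by a unit in $K_0(D)^{\times}_{+}$, one cannot simply assert that $x\mapsto[p(x)]$ is a locally constant $K_0(D)$-valued function. It is precisely the invariance of the \emph{subset} $nK_0(D)^{\times}_{+}$ under the $\Aut{D\otimes\K}$-action that promotes the defining condition of $Z$ to an honest, trivialization-independent, locally constant property of $X$; everything else is a routine continuity-of-projections argument.
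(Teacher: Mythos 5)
Your proposal is correct and takes essentially the same approach as the paper: both proofs rest on local triviality, the unitary equivalence of projections at distance less than $1$, the $\Aut{D\otimes \K}$-invariance of the condition $[p(x)]\in nK_0(D)^{\times}_{+}$ supplied by the Remark preceding the corollary, connectedness of $X$, and both parts of Lemma~\ref{lemma:corners}. The only cosmetic difference is that you organize the propagation as a clopen-set argument with the explicit unitaries $u_x = z_x(z_x^*z_x)^{-1/2}$, whereas the paper straightens $p$ to constant projections over a refined finite cover by conjugating the trivializations with multiplier unitaries.
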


\begin{proof} Let $V_1,..,V_k$ be a finite cover of $X$ by compact sets such that there are bundle isomorphisms
$\phi_i:A(V_i)\cong C(V_i)\otimes D \otimes \K$. Let $p_i$ be the image of  the restriction of $p$ to
$V_i$ under $\phi_i$. After refining the cover $(V_i)$, if necessary, we may
assume that $\|p_i(x)-p_i(y)\|<1$  for all $x,y \in V_i$. This allows us to find a unitary $u_i$ in the multiplier
algebra of $C(V_i)\otimes D \otimes \K$ such that after replacing $\phi_i$ by $u_i\phi_i u_i^*$
and $p_i$ by $u_ip_i u_i^*$, we may assume that $p_i$ are constant projections.
{Since $X$ is connected and $[p(x_0)] \in nK_0(D)^{\times}_{+}$ by assumption, it follows from $[p_i(x_0)] \in nK_0(D)^{\times}_{+}$ for $x_0 \in V_i$ and the above remark that $[p_j(x)] \in nK_0(D)^{\times}_{+}$ for all $1 \leq j \leq k$ and all $x \in V_j$. Then Lemma~\ref{lemma:corners} implies $(pAp)(V_j)\cong C(V_j)\otimes M_{n}(D)$. By Lemma~\ref{lemma:corners} we also have that $[p(x_0)] \neq 0$ implies $[p(x_0)] \in nK_0(D)^{\times}_{+}$ for some $n \in \N$ proving the statement about the case $[p(x_0)] \in K_0(D) \setminus \{0\}$.}
\end{proof}

We study the image of the stabilization map
	 $$\mathscr{C}_{D\otimes M_n(\C)}(X)\to \mathscr{C}_{D\otimes \K}(X)$$
	 induced by the map $A\mapsto A\otimes \K$, or equivalently by the map
	 $$\Aut{D\otimes M_n(\C)}\to \Aut{D\otimes M_n(\C) \otimes \K} \cong \Aut{D\otimes \K}.$$
	
Let us recall that  $\mathcal{D}$ denotes the class of strongly self-absorbing $C^*$-algebras which satisfy the UCT and which are
either quasidiagonal or purely infinite.
\begin{theorem}\label{thm:applications} Let $D$ be a strongly self-absorbing $C^*$-algebra in the class $\mathcal{D}$.  Let $A$ be a locally trivial continuous field of $C^*$-algebras over a connected compact metrizable space $X$
such that  $A(x) \cong D \otimes \K$ for all $x\in X$.
The following assertions are equivalent:
\begin{itemize}
\item[(1)] $\delta_k(A)=0$ for all $k\geq 0$.
\item[(2)] The field $A\otimes M_{\mathbb{Q}}$ is trivial.

\item[(3)] There is  an integer $n\geq 1$ and a  unital locally trivial continuous field $\mathcal{B}$ over $X$ with all fibers isomorphic to $M_n(D)$ such that $A\cong \mathcal{B}\otimes \K$.	
\item[(4)] $A$ is orientable and $A^{\otimes m} \cong C(X)\otimes D \otimes \K$  for some $m \in \N$.\end{itemize}
\end{theorem}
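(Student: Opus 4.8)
The plan is to establish the cycle $(1)\Rightarrow(2)\Rightarrow(3)\Rightarrow(1)$ together with the separate equivalence $(1)\Leftrightarrow(4)$, so that the substantive work is concentrated in $(2)\Rightarrow(3)$. The equivalence $(1)\Leftrightarrow(4)$ is formal: if $A$ is orientable then $\delta_0(A)=0$, and if moreover $A^{\otimes m}$ is trivial then $m\,\delta_k(A)=\delta_k(A^{\otimes m})=0$ forces $\delta_k(A)=0$ in the torsion-free groups $H^{2k+1}(X,\Q)$; conversely, if all $\delta_k(A)=0$ then by the torsion criterion (an element is torsion precisely when $\delta_0$ is torsion and $\delta_k=0$ for $k\geq1$) the class $[A]$ is torsion in $\bar{E}^1_D(X)$, so $A^{\otimes m}$ is trivial for some $m$. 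For $(1)\Rightarrow(2)$ I would note that $\delta_0(A)=0$ makes $A$, and hence $A\otimes\QQ$, orientable, while $\delta_k(A\otimes\QQ)=\delta_k(A)=0$ by naturality. Since $D\otimes\QQ$ is $\QQ$, $\QQ\otimes\Cuntz{\infty}$, or $\Cuntz{2}$, the class of $A\otimes\QQ$ lies in $\bar{E}^1_{D\otimes\QQ}(X)$, on which the classes $\delta_k$, $k\geq1$, form a complete invariant ($\bar{E}^1_{D\otimes\QQ}(X)\cong\bigoplus_{k\geq1}H^{2k+1}(X,\Q)$); as they all vanish, $A\otimes\QQ$ is trivial (the case $D=\Cuntz{2}$ being trivial since $K_*(\Cuntz{2})=0$).

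The heart of the proof is $(2)\Rightarrow(3)$, which replaces Serre's construction of an Azumaya algebra by a direct approximation of projections. We may assume $K_0(D)\neq0$, the case $D=\Cuntz{2}$ being trivial with $n=1$. Writing $\QQ=\varinjlim_r M_{d_r}(\C)$ with $d_r\mid d_{r+1}$, we have $A\otimes\QQ=\varinjlim_r\bigl(A\otimes M_{d_r}(\C)\bigr)$ as an inductive limit of $C(X)$-algebras. By $(2)$ the field $A\otimes\QQ$ is trivial, hence admits a global projection $P$ whose fiberwise class $[P(x_0)]$ is nonzero in $K_0(D\otimes\QQ)\cong\Q$. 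Approximating $P$ by a self-adjoint element of a finite stage and applying functional calculus yields, for $r$ large, a continuous field of projections $P_r\in A\otimes M_{d_r}(\C)$ with $\|P_r-P\|<1$; thus $[P_r]=[P]$ in $K_0(A\otimes\QQ)$, and since $K_0(D)\hookrightarrow K_0(D\otimes\QQ)$ is injective we obtain $[P_r(x_0)]\neq0$ in $K_0(D)$. Because $M_{d_r}(\C)\otimes\K\cong\K$ and $A$ absorbs $\K$, we have $A\otimes M_{d_r}(\C)\cong A$, so $P_r$ corresponds to a projection $p\in A$ with $[p(x_0)]\in K_0(D)\setminus\{0\}$. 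Corollary~\ref{cor:global_n} then supplies an $n\geq1$ with $[p(x_0)]\in nK_0(D)^{\times}_{+}$ and $pAp\in\mathscr{C}_{D\otimes M_n(\C)}(X)$; as every fiber $D\otimes\K$ is simple, $p$ is fiberwise full, so $pAp$ is a full corner of the stable field $A$ and $A\cong pAp\otimes\K$. Setting $\mathcal{B}=pAp$ gives $(3)$.

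For $(3)\Rightarrow(1)$, suppose $A\cong\mathcal{B}\otimes\K$ with $\mathcal{B}$ having fibers $M_n(D)$. The transition automorphisms of $\mathcal{B}$ lie in $\Aut{M_n(D)}=\uAut{M_n(D)}$, which is path connected by the Lemma; stabilizing sends them into the identity component of $\Aut{D\otimes\K}$, so $A$ is orientable and $\delta_0(A)=0$. For $k\geq1$, $\delta_k(A)$ is the image of the classifying map of $\mathcal{B}$ under the stabilization $B\uAut{M_n(D)}\to B\uAut{D\otimes\K}$ followed by the rational projection onto $H^{2k+1}(X,\Q)$, which detects $\pi_{2k}$ of the structure group. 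Now $\uAut{M_n(D)}$ is built from the unitary group of the unital algebra $M_n(D)$ and from $\Aut{D}$, both of which contribute rational homotopy only in odd degrees; hence $\uAut{M_n(D)}$ has no rational homotopy in positive even degrees (for $D=\C$ this is simply $\pi_2(PU(n))=0$), the composite above is rationally null, and $\delta_k(A)=0$ for all $k\geq1$. This closes the cycle.

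The step I expect to be the genuine obstacle is $(2)\Rightarrow(3)$: the delicate points are that the approximating projection must be produced as a continuous field over all of $X$, not merely fiber by fiber, and that its class survives in $K_0(D)$ after inverting the primes dividing $d_r$. Connectedness of $X$ together with Corollary~\ref{cor:global_n} is exactly what promotes the pointwise condition $[p(x_0)]\neq0$ to a globally constant matrix-bundle structure. A secondary technical input is the vanishing of the even rational homotopy of $\uAut{M_n(D)}$ used in $(3)\Rightarrow(1)$, which reflects the fact that, in contrast to the stable group $\Aut{D\otimes\K}$, the unitary group of a finite matrix amplification is not contractible and carries only odd rational homotopy.
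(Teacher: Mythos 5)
Your steps (1) $\Leftrightarrow$ (2) and (2) $\Rightarrow$ (3) do match the paper's proof (inductive limit $A \otimes \QQ = \varinjlim A \otimes M_{k(i)}(\C)$, approximation of the global projection at a finite stage, Corollary~\ref{cor:global_n}, Brown's full-corner theorem, and $A \cong A \otimes \K$ via Hirshberg--R{\o}rdam--Winter). The first genuine gap is your (3) $\Rightarrow$ (1). It rests entirely on the claim that $\uAut{M_n(D)}$ has no rational homotopy in positive even degrees, justified only by saying this group is ``built from'' $U(M_n(D))$ and $\Aut{D}$, both allegedly carrying only odd rational homotopy. No fibration relating these groups is exhibited, and, more seriously, the homotopy groups of the \emph{unstable} group $\Aut{D}$ are not computed in the literature the paper relies on, so the assertion about $\Aut{D}$ is unsupported; it is true for $D = \C$, where $\Aut{M_n(\C)} = PU(n)$, but that is precisely the case one cannot extrapolate from. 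Note that the known stable computation has the opposite parity: $\pi_{2k}(\Aut{D \otimes \K}) \cong K_0(D)$ while $\pi_{2k+1}(\Aut{D \otimes \K}) = 0$, so parity of rational homotopy is not preserved by the operations you invoke, and your vanishing claim would need a real proof. The paper sidesteps all of this by proving (3) $\Rightarrow$ (2) instead: $\mathcal{B} \otimes \QQ$ is a \emph{unital} locally trivial field whose fibers are the strongly self-absorbing algebra $D \otimes \QQ$, the group $\Aut{D \otimes \QQ}$ is contractible by \cite[Thm.~2.3]{paper:DadarlatP1}, hence $\mathcal{B} \otimes \QQ$ is trivial and $A \otimes \QQ \cong (\mathcal{B} \otimes \QQ) \otimes \K \cong C(X) \otimes D \otimes \QQ \otimes \K$. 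You should route your cycle through this argument rather than through rational homotopy of the unstable automorphism groups.

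The second gap is circularity in (1) $\Rightarrow$ (4). You invoke ``the torsion criterion'' (torsion $\Leftrightarrow$ $\delta_0$ torsion and $\delta_k = 0$ for $k \geq 1$), but in this paper that criterion is Corollary~\ref{cor:torsion_delta}, which is \emph{deduced from} Theorem~\ref{thm:applications}; it cannot serve as an input to the theorem. What is actually needed --- and what the paper supplies as the substance of (2) $\Leftrightarrow$ (4) --- is that the kernel of $\tau \colon \bar{E}^1_D(X) \to \bar{E}^1_{D \otimes \QQ}(X)$, $\tau[A] = [A \otimes \QQ]$, consists of torsion elements. The paper proves this by rationalizing: $\tau \otimes \mathrm{id}_{\Q} \colon \bar{E}^*_D(X) \otimes \Q \to \bar{E}^*_{D \otimes \QQ}(X) \otimes \Q \cong \bar{E}^*_{D \otimes \QQ}(X)$ is a natural transformation of cohomology theories which is an isomorphism on coefficients for $D \neq \C$, because $\bar{E}^{-i}_D(pt) \cong \pi_i(\uAut{D \otimes \K}) \cong K_i(D)$ and $K_i(D) \otimes \Q \to K_i(D \otimes \QQ)$ is bijective; the case $D = \C$ is handled by embedding $\bar{E}^*_{\C}(X)$ as a direct summand of $\bar{E}^*_{\ZZ}(X)$. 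One also needs, as the paper notes, that $\delta_0(A) = 0$ if and only if $\delta^s_0(A) = 0$, which follows from injectivity of $K_0(D) \to K_0(D \otimes \QQ)$, so that orientability passes between $A$ and $A \otimes \QQ$; your proposal glosses over this point as well. Without the rationalization argument, your (1) $\Rightarrow$ (4) is an appeal to a corollary of the very theorem being proved.
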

\begin{proof} The statement is immediately verified if $D\cong \OO_2$. Indeed all locally trivial fields with fiber
$\OO_2\otimes \K$ are trivial since $\Aut{\OO_2\otimes \K}$ is contractible by \cite[Cor.~17 \& Thm.~2.17]{paper:DadarlatP1}. For the remainder of the proof we may therefore assume that $D\ncong \OO_2$.

(1) $\Leftrightarrow$ (2) If $D\in \mathcal{D}_{qd}$, then it is known that $D\otimes M_{\Q}\cong M_\Q$.
Similarly, if $D\in \mathcal{D}_{pi}$ and $D\ncong \OO_2$ then  $D\otimes M_\Q\cong \OO_{\infty}\otimes M_{\Q}$.
If $A$ is as in the statement, then $A\otimes M_{\Q}$ is a locally trivial field whose  fibers are all isomorphic to either
$M_\Q \otimes \K$ or  to $\OO_{\infty}\otimes M_{\Q}\otimes \K$. In either case, it was shown in
\cite[Cor.~4.5]{paper:DadarlatP1} that such a field is trivial if and only if  $\delta_k(A)=0$ for all $k\geq 0$. As reviewed earlier in this section, this follows from the explicit computation of  $E_{M_{\mathbb{Q}}}^{1}(X)$ and $E_{M_{\mathbb{Q}}\otimes \mathcal{O}_\infty}^{1}(X)$.

(2) $\Rightarrow$ (3)
 Assume now that $A\otimes M_{\Q}$ is trivial, i.e. $A\otimes M_{\Q}\cong C(X)\otimes  D\otimes M_{\mathbb{Q}} \otimes \K .$ Let $p\in A\otimes M_{\Q}$ be the projection that corresponds under this isomorphism to the projection $ 1\otimes e \in C(X)\otimes D \otimes  M_{\mathbb{Q}} \otimes \K $ where $1$ is the unit of the $C^*$-algebra
$C(X)\otimes D \otimes M_{\mathbb{Q}}$ and $e\in \K$ is a rank-one projection. Then $[p(x)]\neq 0$ in
$K_0(A(x)\otimes M_{\Q})$ for all $x\in X$ (recall that $D\ncong \OO_2$).
Let us write $M_{\mathbb{Q}}$ as the direct limit of an increasing sequence of its subalgebras $M_{k(i)}(\C).$
Then $A\otimes M_{\mathbb{Q}}$ is the direct limit of the sequence
$A_i=A\otimes M_{k(i)}(\C) $. It follows that there exist  $i\geq 1$ and a projection $p_i\in A_i$ such that
$\|p-p_i\|<1.$ Then $\|p(x)-p_i(x)\|<1$  and so $[p_i(x)]\neq 0$ in $K_0(A_i(x))$ for each $x\in X$,
since its image in $K_0(A(x)\otimes M_{\Q})$ is equal to $[p(x)]\neq 0$.
 Let us consider the locally trivial unital field
 $\mathcal{B}:=p_i(A\otimes M_{k(i)}(\C) )p_i$.
 Since the fibers of $A\otimes M_{k(i)}(\C)$ are isomorphic to $D \otimes \K \otimes M_{k(i)}(\C) \cong D \otimes \K$, it follows by Corollary~\ref{cor:global_n} that there is $n\geq 1$ such that all fibers of
  $\mathcal{B}$ are isomorphic to $M_n(D)$.
  Since $\mathcal{B}$ is isomorphic to a full corner of $A\otimes \K$, it follows  by \cite{paperL.G.Brown.Stable.Isom} that
  $A \otimes \K\cong \mathcal{B} \otimes \K$. We conclude by noting that since $A$ is locally trivial and each fiber is stable, then $A\cong A\otimes \K$
by \cite{paper.Hirshberg.Rordam.Winter} and so $A \cong \mathcal{B} \otimes \K$.

(3) $\Rightarrow$ (2) This implication holds for any strongly self-absorbing $C^*$-algebra $D$.
Let $A$ and $\mathcal{B}$ be as in (3).
Let us note that $\mathcal{B}\otimes M_{\Q}$
 is a unital locally trivial field with all fibers isomorphic to the strongly self-absorbing $C^*$-algebra  $D\otimes M_{\mathbb{Q}}$.
 Since $\Aut{D\otimes M_{\mathbb{Q}}}$ is contractible by \cite[Thm. 2.3]{paper:DadarlatP1}, it follows that $\mathcal{B}\otimes M_\Q$ is trivial. We conclude that
$A \otimes M_{\mathbb{Q}}\cong (\mathcal{B} \otimes M_{\mathbb{Q}} )\otimes \K\cong C(X)  \otimes D\otimes M_{\mathbb{Q}}\otimes \K.$

(2) $\Leftrightarrow$ (4) This equivalence holds for any strongly self-absorbing $C^*$-algebra $D$ if $A$ is orientable. In particular we do not need to assume that $D$ satisfies the UCT.
In the UCT case we note that since the map $K_0(D)\to K_0(D\otimes M_\Q)$ is injective,
it follows that $A$ is orientable if and only if $A\otimes M_\Q$ is orientable, i.e. $\delta_0(A)=0$ if and only if $\delta^s_0(A)=0$.
Since $\delta_0(A)=0$, $A$ is determined up to isomorphism by its class $[A]\in \bar{E}^1_{D}(X)$.
To complete the proof it suffices to show that the kernel of the map $\tau: \bar{E}^1_{D}(X)\to
\bar{E}^1_{D\otimes M_\Q }(X)$, $\tau[A]= [A\otimes M_{\Q}]$,  consists entirely of torsion elements. Consider the natural
transformation of cohomology theories: $$\tau\otimes \mathrm{id}_{\Q}: \bar{E}^*_{D}(X)\otimes \Q
\to \bar{E}^*_{D\otimes M_\Q}(X)\otimes \Q \cong \bar{E}^*_{D\otimes M_\Q}(X).$$
If $D\neq \C$, it induces an isomorphism on coefficients since $\bar{E}^{-i}_{D}(pt)=\pi_i (\uAut{D\otimes \K})\cong K_i(D)$ by \cite[Thm.2.18]{paper:DadarlatP1} and since the map $K_i(D)\otimes \Q \to K_i(D\otimes M_\Q )$ is bijective. We conclude that the kernel of $\tau$ is a torsion group.
The  same property holds for $D=\C$ since $\bar{E}^*_{\C}(X)$  is a direct summand of $\bar{E}^*_{\ZZ}(X)$
by \cite[Cor.3.8]{paper:DadarlatP1}.
 \end{proof}	
 {
\begin{theorem} \label{thm:app_torsion}
Let $D$, $X$ and $A$ be as in Theorem~\ref{thm:applications} and let
 $n\geq 2$ be an integer. 
  The following assertions are equivalent:
\begin{itemize}
\item[(1)] The field $A\otimes M_{n^{\infty}}$ is trivial.
\item[(2)] There is a $k \in \N$ and a unital locally trivial continuous field $\mathcal{B}$ over $X$ with all fibers isomorphic to $M_{n^k}(D)$ such that $A\cong \mathcal{B}\otimes \K$.	
\item[(3)] $A$ is orientable and $A^{\otimes n^k} \cong C(X)\otimes D \otimes \K$  for some $k \in \N$.
\end{itemize}
\end{theorem}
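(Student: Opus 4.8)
The plan is to imitate the proof of Theorem~\ref{thm:applications}, replacing the universal UHF-algebra $\QQ$ by $M_{n^\infty}$ and using Corollary~\ref{cor:preimage_units} to control the dimensions of the corners that occur. As in that proof the case $D\cong\OO_2$ is trivial, so assume $D\ncong\OO_2$; I would prove $(1)\Leftrightarrow(2)$ and $(1)\Leftrightarrow(3)$. For $(1)\Rightarrow(2)$ I would run the argument of $(2)\Rightarrow(3)$ in Theorem~\ref{thm:applications} with $M_{n^\infty}=\varinjlim_r M_{n^r}(\C)$ in place of $\QQ$: from $A\otimes M_{n^\infty}\cong C(X)\otimes D\otimes M_{n^\infty}\otimes\K$ take the projection $p\leftrightarrow 1\otimes e$ and a projection $p_r\in A_r:=A\otimes M_{n^r}(\C)$ with $\|p-p_r\|<1$. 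Fixing $x_0$, the class $[p_r(x_0)]\in K_0(D)$ maps under the unital inclusion $\theta\colon D\otimes M_{n^r}(\C)\to D\otimes M_{n^\infty}$ to the invertible class $[p(x_0)]$, so by Corollary~\ref{cor:preimage_units} it lies in $mK_0(D)^{\times}_{+}$ for some $m$ all of whose prime factors divide $n$. Corollary~\ref{cor:global_n} then makes $\mathcal B_0:=p_rA_rp_r$ a unital locally trivial field with all fibres $M_m(D)$, and as in Theorem~\ref{thm:applications} (via \cite{paperL.G.Brown.Stable.Isom} and \cite{paper.Hirshberg.Rordam.Winter}) $A\cong\mathcal B_0\otimes\K$. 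Choosing $k$ with $m\mid n^k$ and putting $\mathcal B:=\mathcal B_0\otimes M_{n^k/m}(\C)$ gives a unital field with fibres $M_{n^k}(D)$ and $A\cong\mathcal B\otimes\K$, i.e.\ $(2)$.

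The converse $(2)\Rightarrow(1)$ is immediate and valid for any strongly self-absorbing $D$: if $A\cong\mathcal B\otimes\K$ with $\mathcal B$ having fibres $M_{n^k}(D)$, then $\mathcal B\otimes M_{n^\infty}$ is a unital field whose fibres are the strongly self-absorbing algebra $M_{n^k}(D)\otimes M_{n^\infty}\cong D\otimes M_{n^\infty}$, hence trivial since $\Aut{D\otimes M_{n^\infty}}$ is contractible by \cite[Thm.~2.3]{paper:DadarlatP1}; tensoring with $\K$ shows $A\otimes M_{n^\infty}$ is trivial. I would also note that $(2)$ forces $A$ to be orientable: the structure group of $\mathcal B$ is $\Aut{M_{n^k}(D)}=\uAut{M_{n^k}(D)}$, which is connected, so after stabilisation the structure group of $A$ reduces to $\uAut{D\otimes\K}$. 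This lets me carry out $(1)\Rightarrow(3)$ inside $\bar{E}^1_D(X)$.

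For $(1)\Leftrightarrow(3)$ I would study the homomorphism $\tau_n\colon\bar{E}^1_D(X)\to\bar{E}^1_{D\otimes M_{n^\infty}}(X)$, $\tau_n[A]=[A\otimes M_{n^\infty}]$; here $(1)$ reads $\tau_n[A]=0$ and $(3)$ reads that $A$ is orientable with $n^k[A]=0$. The coefficients $K_i(D\otimes M_{n^\infty})\cong K_i(D)\otimes\Z[1/n]$ of the target are $\Z[1/n]$-modules, so multiplication by $n$ is an equivalence of its representing spectrum and $\bar{E}^*_{D\otimes M_{n^\infty}}(X)$ is a $\Z[1/n]$-module for all $X$. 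This gives $(3)\Rightarrow(1)$ immediately, as $n^k[A]=0$ forces $n^k\tau_n[A]=0$ and hence $\tau_n[A]=0$. For $(1)\Rightarrow(3)$ I would identify $\tau_n$ with localisation inverting $n$: since the target is $\Z[1/n]$-linear, $\tau_n$ factors through $\bar{E}^1_D(X)\to\bar{E}^1_D(X)\otimes\Z[1/n]$, and a comparison of coefficients (both theories are identified by $\tau_n$ on a point, refining the rational comparison used in Theorem~\ref{thm:applications}) shows the resulting map $\bar{E}^1_D(X)\otimes\Z[1/n]\to\bar{E}^1_{D\otimes M_{n^\infty}}(X)$ is an isomorphism. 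Then $\ker\tau_n$ is the kernel of localisation, namely the $n$-primary torsion, so $\tau_n[A]=0$ yields $n^k[A]=0$; with orientability from $(1)\Rightarrow(2)$ this is $(3)$.

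The step I expect to be the main obstacle is this last identification of $\tau_n$ with localisation at $n$ over a base that is only compact metrizable. On finite complexes the comparison of cohomology theories and the identity $\bar{E}^*_D(X)\otimes\Z[1/n]\cong(\bar{E}_D[1/n])^*(X)$ are routine, but for general compact metrizable $X$ I would need the continuity and $\lim^1$ arguments from \cite{paper:DadarlatP1} already underlying the rational characteristic classes of Theorem~\ref{thm:applications}. Granting these, the kernel-of-localisation computation is purely algebraic, and, as promised, no Serre-type argument enters.
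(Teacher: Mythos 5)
Your handling of $(1)\Leftrightarrow(2)$ reproduces the paper's argument essentially verbatim: approximate the Fell projection of the trivialized field $A\otimes M_{n^\infty}$ by a projection in some $A\otimes M_{n^r}(\C)$, control its $K_0$-class at a basepoint via Corollary~\ref{cor:preimage_units}, globalize with Corollary~\ref{cor:global_n}, pad the corner by $M_{n^k/m}(\C)$, and invoke \cite{paperL.G.Brown.Stable.Isom} and \cite{paper.Hirshberg.Rordam.Winter}; the converse via contractibility of $\Aut{D\otimes M_{n^\infty}}$ is also the paper's step. Your derivation of orientability from the connectedness of $\Aut{M_{n^k}(D)}$ is a harmless variant (the paper gets it from Theorem~\ref{thm:applications}). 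The strategy for $(1)\Leftrightarrow(3)$ --- identifying $\ker\tau_n$ with the kernel of $\bar{E}^1_D(X)\to\bar{E}^1_D(X)\otimes\Z[\tfrac1n]$, i.e.\ the $n$-primary torsion, using flatness of $\Z[\tfrac1n]$ --- is also the paper's.

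However, there is a genuine gap in that last step: your claim that a comparison of coefficients shows $\bar{E}^1_D(X)\otimes\Z[\tfrac1n]\to\bar{E}^1_{D\otimes M_{n^\infty}}(X)$ is an isomorphism is false for $D=\C$, which belongs to $\mathcal{D}$ and is covered by the theorem. The identification $\pi_i(\uAut{D\otimes\K})\cong K_i(D)$ from \cite[Thm.~2.18]{paper:DadarlatP1} requires $D\neq\C$ --- indeed the proof of Theorem~\ref{thm:applications}, whose ``rational comparison'' you say you are refining, makes exactly this restriction. For $D=\C$ one has $\pi_i(\Aut{\K})\cong\Z$ for $i=2$ and $0$ for all other $i\geq 1$, so $\bar{E}^1_{\C}(X)\cong H^3(X,\Z)$, whereas $\pi_{2k}(\uAut{M_{n^\infty}\otimes\K})\cong K_0(M_{n^\infty})\cong\Z[\tfrac1n]$ for \emph{all} $k\geq 1$. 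Concretely, for $X=S^5$ the left-hand side of your claimed isomorphism is $H^3(S^5,\Z)\otimes\Z[\tfrac1n]=0$ while the right-hand side is $\Z[\tfrac1n]\neq 0$; your map $\tau_n$ is not a localization in this case, and so the purely algebraic computation of $\ker\tau_n$ does not apply --- ironically the case left open is the classical one. The paper closes this by treating $D=\C$ separately: $\bar{E}^*_{\C}(X)$ embeds as a natural direct summand of $\bar{E}^*_{\ZZ}(X)$ via $\C\to\ZZ$ \cite[Cor.~4.8]{paper:DadarlatP1}, flatness gives injectivity of $\bar{E}^*_{\C}(X)\otimes\Z[\tfrac1n]\to\bar{E}^*_{\ZZ}(X)\otimes\Z[\tfrac1n]$, and naturality of $\tau_n$ reduces the claim to the case $D=\ZZ$, which your argument does cover. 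With this one additional step your proof is complete and coincides with the paper's.
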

}

\begin{proof} \label{pf:app_torsion}
By reasoning as in the proof of Theorem~\ref{thm:applications}, we may assume that $D\ncong \Cuntz{2}$.

(1) $\Rightarrow$ (2): By assumption the continuous field $A \otimes M_{n^{\infty}}$ is trivializable and hence it satisfies the global Fell condition of \cite{paper:DadarlatP1}. This means that there is a full projection $p_{\infty} \in A \otimes M_{n^{\infty}}$ with the property that $p_{\infty}(x) \in K_0(A(x) \otimes M_{n^{\infty}})_+^{\times}$ for all $x \in X$. Let $\nu_i \colon M_{n^i}(\C) \to M_{n^{\infty}}$ be a unital inclusion map. Since $A \otimes M_{n^{\infty}}$ is the inductive limit of the sequence 
\[
	A \to A \otimes M_n(\C) \to \dots \to A \otimes M_{n^i}(\C) \to A \otimes M_{n^{i+1}}(\C) \to \dots 
\] 
there is an $i \in \N$ and a full projection $p \in A \otimes M_{n^i}(\C)$ with $\|(\id{A} \otimes \nu_i)(p) - p_{\infty} \| < 1$. Fix a point $x_0 \in X$. 
 Let $\theta \colon A(x_0)\otimes M_{n^i}(\C) \to A(x_0) \otimes M_{n^{\infty}}$ be the unital inclusion induced by $\nu_i$.
 Note that $\theta_*([p(x_0)]) =(\id{A(x_0)} \otimes \nu_i)_*([p(x_0)]) = [p_{\infty}(x_0)] \in K_0(A(x_0) \otimes M_{n^{\infty}})_+^{\times}$.  By Corollary~\ref{cor:preimage_units} this implies that $[p(x_0)] \in rK_0(A(x_0))^{\times}_+$ for some $r\in \N$  that  divides $n^k$ for some $k \in \N\cup \{0\}$. 
 Then 
 $\mathcal{B}_0:=p(A \otimes M_{n^i}(\C))p \in \mathscr{C}_{D \otimes M_{r}(\C)}(X)$ by Corollary~\ref{cor:global_n}.
  Write $n^k=mr$ with $m\in \N$. It follows that $\mathcal{B}:=\mathcal{B}_0\otimes M_m(\C)\in \mathscr{C}_{D \otimes M_{n^k}(\C)}(X)$.
The fact that $\mathcal{B} \otimes \K \cong A$ follows just as in step (2) $\Rightarrow$ (3) in the proof of Theorem~\ref{thm:applications}.

{
(2) $\Rightarrow$ (1): This is just the same argument as step (3) $\Rightarrow$ (2) in the proof of Theorem~\ref{thm:applications}.
}

{
(1) $\Leftrightarrow$ (3): }
{The orientability of $A$ follows from Theorem~\ref{thm:applications}.
}
{Observe that the elements $[A] \in \mathscr{C}^0_{D \otimes \K}(X) = \bar{E}^1_D(X)$ such that $n^k[A]=0$ or equivalently $A^{\otimes n^k}$ is trivializable for some $k \in \N\cup\{0\}$ coincide precisely with the elements in the kernel of the group homomorphism $\bar{E}^1_D(X) \to \bar{E}^1_D(X) \otimes \Z[\tfrac{1}{n}]$. Since $\Z[\tfrac{1}{n}]$ is flat, it follows that $X \mapsto \bar{E}^*_D(X) \otimes \Z[\tfrac{1}{n}]$ still satisfies all axioms of a generalized cohomology theory. In particular, we have the following commutative diagram of natural transformations of cohomology theories:
\[
	\xymatrix{
		\bar{E}^*_D(X) \ar[r] \ar[d] & \bar{E}^*_{D \otimes M_{n^{\infty}}}(X) \ar[d]^-{\cong} \\
		\bar{E}^*_D(X) \otimes \Z[\tfrac{1}{n}] \ar[r] & \bar{E}^*_{D \otimes M_{n^{\infty}}}(X) \otimes \Z[\frac{1}{n}]
	}
\]
where the isomorphism on the right hand side can be checked on the coefficients. A similar argument shows that for $D \neq \C$ the bottom homomorphism is an isomorphism. Thus the kernel of the left vertical map agrees with the one of the upper horizontal map in this case. For $D = \C$ we can use that $\bar{E}^*_{\C}(X)$ embeds as a direct summand into $\bar{E}^*_{\ZZ}(X)$ via the natural $*$-homomorphism $\C \to \ZZ$ \cite[Cor.~4.8]{paper:DadarlatP1}. In particular, $\bar{E}^*_{\C}(X) \otimes \Z[\tfrac{1}{n}] \to \bar{E}^*_{\ZZ}(X) \otimes \Z[\tfrac{1}{n}]$ is injective.
}
\end{proof} 

\begin{corollary}\label{cor: n-torsion}
Let $D$ and $X$ be as in Theorem~\ref{thm:applications}. Then any element $x\in \bar{E}_D^1(X)$ with $nx=0$ is represented by the stabilization of a unital locally trivial field over $X$ with all fibers isomorphic to $M_{n^k}(D)$ for some $k\geq 1$. Moreover if $A\in \mathscr{C}_{D\otimes \K}(X)$, then $A\otimes M_{\mathbb{Q}}$ is trivial  
$\Leftrightarrow$ $A\otimes M_{n^\infty}$ is trivial for some $n\in \N$ $\Leftrightarrow$ $A$ is orientable and $n^k[A]=0$ in $\bar{E}_D^1(X)$ for  some $k\in \N$ and some $n \in \N$.
\end{corollary}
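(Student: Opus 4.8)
The plan is to deduce both assertions directly from Theorems~\ref{thm:applications} and~\ref{thm:app_torsion}; essentially all that is required is to keep track of the identifications and of the existential quantifiers over $n$ and $k$. For the opening statement, given $x \in \bar{E}^1_D(X)$ with $nx = 0$ I identify $x$ with an orientable field $A \in \mathscr{C}^0_{D\otimes \K}(X)$ via the isomorphism $\mathscr{C}^0_{D\otimes \K}(X) \cong \bar{E}^1_D(X)$ recalled in the background. The hypothesis $nx = 0$ says precisely that $A^{\otimes n} \cong C(X) \otimes D \otimes \K$, so (after discarding the trivial case $n = 1$, which forces $A$ itself to be trivial) condition (3) of Theorem~\ref{thm:app_torsion} holds with exponent $1$. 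The implication (3) $\Rightarrow$ (2) of that theorem then furnishes a unital locally trivial field $\mathcal{B}$ with all fibers isomorphic to $M_{n^k}(D)$ and with $A \cong \mathcal{B} \otimes \K$, exhibiting $x$ as the stabilization of $\mathcal{B}$.

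For the three listed conditions, which I label (a), (b), (c), I would close the cycle (a) $\Rightarrow$ (c) $\Rightarrow$ (b) $\Rightarrow$ (a). For (a) $\Rightarrow$ (c), the equivalence (2) $\Leftrightarrow$ (4) of Theorem~\ref{thm:applications} turns triviality of $A \otimes M_{\Q}$ into the assertion that $A$ is orientable and $A^{\otimes m} \cong C(X)\otimes D\otimes \K$ for some $m$; taking $n = m$ and $k = 1$ then gives $n^k[A] = 0$. For (c) $\Rightarrow$ (b), I apply the implication (3) $\Rightarrow$ (1) of Theorem~\ref{thm:app_torsion} to the specific integer $n$ appearing in (c), assuming $n \geq 2$ at no cost. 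Finally, for (b) $\Rightarrow$ (a), I tensor the trivial field $A \otimes M_{n^\infty}$ with $M_{\Q}$ and use that $M_{\Q}$ absorbs every UHF algebra, $M_{n^\infty}\otimes M_{\Q}\cong M_{\Q}$, so that $A \otimes M_{\Q} \cong C(X)\otimes D\otimes M_{\Q}\otimes \K$ is trivial.

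I do not expect any substantial obstacle, as the statement is a repackaging of the two main theorems. The only points that warrant care are that every class in the reduced group $\bar{E}^1_D(X)$ is automatically orientable, so that the orientability hypotheses line up throughout, and the absorption identity $M_{n^\infty}\otimes M_{\Q}\cong M_{\Q}$ used in (b) $\Rightarrow$ (a). The edge case $n = 1$ is harmless and may always be replaced by any $n \geq 2$, since $n = 1$ forces $[A] = 0$.
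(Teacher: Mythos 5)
Your proposal is correct and takes essentially the same route as the paper: the published proof likewise deduces the first assertion from condition (3) of Theorem~\ref{thm:app_torsion} (observing that it is equivalent to orientability together with $n^k[A]=0$ in $\bar{E}^1_D(X)$) and obtains the chain of equivalences by combining Theorems~\ref{thm:applications} and~\ref{thm:app_torsion}, so your cycle (a)$\Rightarrow$(c)$\Rightarrow$(b)$\Rightarrow$(a) and the absorption $M_{n^\infty}\otimes M_{\Q}\cong M_{\Q}$ merely spell out the paper's two-line citation. Your handling of the harmless edge case $n=1$ is also fine.
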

(An example from \cite{paper:AtiyahSegal} for $D=\C$ shows that in general one cannot always arrange that $k=1$.)
\begin{proof} The first part follows from Theorem~\ref{thm:app_torsion}. Indeed, condition (3) of that theorem is equivalent to requiring that $A$ is orientable and $n^k[A]=0$ in $\bar{E}^1_D(X)$. The second part follows from Theorems~\ref{thm:applications} and \ref{thm:app_torsion}.\end{proof}

\begin{definition} \label{def:Brauer} Let $D$ be  a strongly self-absorbing $C^*$-algebra. If $X$ is connected
compact metrizable space we define the Brauer group $Br_D(X)$ as
equivalence classes of continuous fields $A\in \bigcup_{n\geq 1}\mathscr{C}_{M_n(D)}(X)$.
Two continuous fields $A_i\in \mathscr{C}_{M_{n_i}(D)}(X)$, $i=1,2$ are equivalent, if
$$A_1 \otimes p_1C(X, M_{N_1}(D))p_1\cong A_2 \otimes p_2C(X, M_{N_2}(D))p_2 ,$$
for some full projections $p_i\in C(X,M_{N_i}(D)).$ We denote by $[A]_{Br}$ the class of $A$ in $Br_D(X)$.
The multiplication on $Br_D(X)$ is induced by the tensor product operation, after fixing an isomorphism
$D\otimes D\cong D$.  We will show in a moment that the monoid $Br_D(X)$ is a group.
\end{definition}

\begin{remark} 
It is worth noting the following two alternative descriptions of the Brauer group.
(a) If $D\in \mathcal{D}$ is quasidiagonal, then 
two  continuous fields $A_i\in \mathscr{C}_{M_{n_i}(D)}(X)$, $i=1,2$ have equal classes in $Br_{D}(X)$, if and only if
$A_1 \otimes p_1C(X, M_{N_1}(\C))p_1\cong A_2 \otimes p_2C(X, M_{N_2}(\C))p_2 ,$
for some full projections $p_i\in C(X,M_{N_i}(\C)).$ 
(b) If $D\in \mathcal{D}$ is purely infinite, then 
two  continuous fields $A_i\in \mathscr{C}_{M_{n_i}(D)}(X)$, $i=1,2$ have equal classes in $Br_{D}(X)$, if and only if
$A_1 \otimes p_1C(X, M_{N_1}(\OO_\infty))p_1\cong A_2 \otimes p_2C(X, M_{N_2}(\OO_\infty))p_2 ,$
for some full projections $p_i\in C(X,M_{N_i}(\OO_\infty)).$ 
In order to justify (a) we observe that if $D$ is quasidiagonal,  then every projection $p\in C(X, M_{N}(D))$   has a multiple $p(m):=p \otimes 1_{M_{m}}(\C)$ such that $p(m)$ 
is Murray-Von Neumann  equivalent to a projection in $C(X, M_{Nm}(\C))\otimes 1_{D}\subset C(X, M_{Nm}(\C))\otimes D$ and that $A_i \otimes D\cong A_i$ by \cite{paper.Hirshberg.Rordam.Winter}. For (b) we note that if $D$ is purely infinite, then then every projection $p\in C(X, M_{N}(D))$   has a multiple $p \otimes 1_{M_{m}}(\C)$ that
is Murray-Von Neumann  equivalent to a projection in $C(X, M_{Nm}(\OO_\infty))\otimes 1_{D}.$
\end{remark}

One has the following generalization of a result of Serre, \cite[Thm.1.6]{paper:Grothendieck}.
\begin{theorem}\label{thm:Brauer_Serre}  Let $D$ be a strongly self-absorbing $C^*$-algebra in $\mathcal{D}$. 
\begin{itemize}
 \item[(i)] $\Tor{\bar{E}^1_D(X)}= ker \left(\bar{E}^1_D(X) \stackrel{\bar{\delta}}\longrightarrow  \bigoplus_{k \geq 1} H^{2k+1}(X,\mathbb{Q})\right)$
 \item[(ii)] The map $\theta: Br_D(X)\to \Tor{\bar{E}^1_D(X)}$, $[A]_{Br}\mapsto [A\otimes \K]$ is an isomorphism of groups.
 \end{itemize}
\end{theorem}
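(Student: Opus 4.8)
The plan is to read part (i) straight off Theorem~\ref{thm:applications} and then to treat the map $\theta$ of part (ii) by the usual template (well-defined $\to$ multiplicative $\to$ surjective $\to$ kernel), arranging the logic so that triviality of the kernel yields \emph{simultaneously} that $Br_D(X)$ is a group and that $\theta$ is injective. For (i), note that the target $\bigoplus_{k\geq1}H^{2k+1}(X,\Q)$ is a rational vector space, so every torsion class of $\bar{E}^1_D(X)$ lies in $\ker\bar\delta$. Conversely, if $\bar\delta(a)=0$ then $\delta_k(a)=0$ for all $k\geq1$; since $a\in\bar{E}^1_D(X)$ is orientable and $X$ is connected we also have $\delta_0(a)=0$, so $\delta_k(a)=0$ for every $k\geq0$. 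By the equivalence (1)$\Leftrightarrow$(4) of Theorem~\ref{thm:applications} this forces $a^{\otimes m}\cong C(X)\otimes D\otimes\K$ for some $m$, i.e. $ma=0$, so $a$ is torsion. This gives (i).

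For (ii) I would first verify that $\theta$ is a well-defined monoid homomorphism landing in the torsion subgroup. For well-definedness, suppose $A_1\otimes p_1C(X,M_{N_1}(D))p_1\cong A_2\otimes p_2C(X,M_{N_2}(D))p_2$. Tensoring with $\K$ and using that a full corner is stably isomorphic to the ambient algebra \cite{paperL.G.Brown.Stable.Isom} gives $p_iC(X,M_{N_i}(D))p_i\otimes\K\cong C(X)\otimes D\otimes\K$; absorbing the extra tensor factor of $D$ via \cite{paper.Hirshberg.Rordam.Winter} then yields $A_1\otimes\K\cong A_2\otimes\K$. Compatibility of the tensor products (and of the fixed isomorphism $D\otimes D\cong D$) shows $\theta$ is multiplicative. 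Finally, every $A\in\mathscr{C}_{M_n(D)}(X)$ satisfies condition (3) of Theorem~\ref{thm:applications} with $\mathcal{B}=A$, so its stabilization has $\delta_k=0$ for all $k$ and hence lies in $\Tor{\bar{E}^1_D(X)}$ by (i). Surjectivity is the reverse reading of the same theorem: given $a\in\Tor{\bar{E}^1_D(X)}$, part (i) supplies $\delta_k=0$ for all $k$, so (1)$\Rightarrow$(3) produces $\mathcal{B}\in\mathscr{C}_{M_n(D)}(X)$ whose stabilization represents $a$, whence $\theta([\mathcal{B}]_{Br})=a$.

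The decisive reduction is to prove that the only class mapping to $0$ is the class of the trivial fields. Once this is in hand, surjectivity lets me invert any $[A]_{Br}$ (choose a preimage of the inverse of $\theta([A]_{Br})$ and multiply, the product lying in $\theta^{-1}(0)=\{\text{identity}\}$), so $Br_D(X)$ is a group; and then a group homomorphism with trivial kernel is injective. So suppose $A\otimes\K\cong C(X)\otimes D\otimes\K$. Transporting the projection $1_A\otimes e$ through this isomorphism exhibits $A$ as a corner $r\,(C(X)\otimes D\otimes\K)\,r$ cut by a full projection $r$ whose fibre class $[r(x)]$ is continuous in $x$, hence constant on the connected space $X$, and therefore lies in $nK_0(D)^{\times}_+$. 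Being of constant finite fibre class, $r$ is Murray-von Neumann equivalent in $C(X)\otimes D\otimes\K$ to a projection $p\in C(X,M_N(D))$ for some $N$, so $A\cong pC(X,M_N(D))p=\mathrm{End}_{C(X)\otimes D}(\xi)$ is the endomorphism algebra of the finitely generated projective module $\xi$ defined by $p$. Endomorphism algebras are Brauer-trivial: taking $P_1=C(X)\otimes D$ (a rank-one full corner) and $P_2=pC(X,M_N(D))p$ one computes $A\otimes P_1\cong A\otimes D\cong D\otimes A\cong (C(X)\otimes D)\otimes P_2$, which exhibits $[A]_{Br}=[C(X)\otimes D]_{Br}$, the identity.

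I expect the main obstacle to be precisely this last, Serre-type, step: realizing a \emph{stably} trivial field as a \emph{genuine} endomorphism algebra of a finitely generated projective $C(X)\otimes D$-module, and in particular pushing the cut-down projection from $C(X)\otimes D\otimes\K$ into a finite matrix algebra $C(X,M_N(D))$ while controlling fibre classes. The orientability built into the class $\mathcal{D}$ (via the Lemma together with the explicit description of $K_0(D)^{\times}_+$ in Lemma~\ref{lemma:corners}) is exactly what makes $[r(x)]$ constant and of the admissible form $nK_0(D)^{\times}_+$, which is what allows the descent from $\K$ to a finite corner to go through.
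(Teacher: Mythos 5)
Your proposal is correct and essentially reproduces the paper's proof: part (i) is read off from Theorem~\ref{thm:applications} exactly as the paper does (the rational target kills torsion, and $\bar\delta(a)=0$ plus orientability gives triviality of $a^{\otimes m}$ via (1)$\Leftrightarrow$(4)), and part (ii) uses the same monoid observation (a surjective morphism of commutative monoids onto a group with trivial preimage of the identity is an isomorphism of groups), with surjectivity from (1)$\Rightarrow$(3) and the kernel computed, as in the paper, by transporting the rank-one projection through a trivialization of $B\otimes\K$ into a finite corner so that $B\cong p\,C(X)\otimes D\otimes M_N(\C)\,p$ is Brauer-trivial. The one quibble is your justification of the descent from $C(X)\otimes D\otimes\K$ to the finite corner $C(X,M_N(D))$: it rests on the compactness/norm-approximation argument (every projection in $C(X)\otimes D\otimes\K$ is norm-close, hence unitarily equivalent, to a projection in some corner $C(X,M_N(D))$, which is what the paper's ``after conjugating $\phi$ by a unitary'' accomplishes), not on constancy of the fibre class $[r(x)]$, which is neither needed for nor by itself sufficient to establish that step.
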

\begin{proof}  (i) was established in the last part of the proof of 
Theorem~\ref{thm:applications}.

(ii) We denote by $L_p$ the continuous field $p\,C(X, M_{N}(D))p$.
Since $L_p \otimes \K \cong C(X, D\otimes \K)$ it follows that the map $\theta$ is a well-defined morphism of monoids.

We use the following observation. Let $\theta:S \to G$ be a unital surjective morphism
of commutative monoids with units denoted by 1. Suppose that $G$ is a group and  that
$\{s\in S\colon \theta(s)=1\}=\{1\}$. Then $S$ is a group and $\theta$ is an isomorphism.
Indeed if $s\in S$, there is $t\in S$ such that $\theta(t)=\theta(s)^{-1}$ by surjectivity of $\theta$.
Then $\theta(st)=\theta(s)\theta(t)=1$ and so $st=1$. It follows that $S$ is a group and that $\theta$ is injective.
 
 We are going to apply this observation to the map $\theta: Br_D(X)\to \Tor{\bar{E}^1_D(X)}$.  By condition (3) of Theorem~\ref{thm:applications} we see that $\theta$ is surjective. Let us  determine the set $\theta^{-1}(\{0\})$. We are going to show that
  if $B\in \mathscr{C}_{D\otimes M_n(\C)}(X)$, then $[B\otimes \K]=0$ in $\bar{E}^1_D(X)$ if and only if
\[B\cong p \left(C(X)\otimes D \otimes M_N(\C)\right)p\cong \mathcal{L}_{C(X, D)} (p \,C(X, D)^N)\] for some selfadjoint projection $p\in C(X)\otimes D \otimes M_N(\C) \cong M_N(C(X,D))$.
  Let $B\in \mathscr{C}_{D\otimes M_n(\C)}(X)$ be such that $[B\otimes \K]=0$ in $\bar{E}^1_D(X)$.
Then there is an isomorphism of continuous fields $\phi: B\otimes \K \stackrel{\cong}\longrightarrow C(X)\otimes D \otimes \K$. After conjugating $\phi$ by a unitary we may assume that $p:=\phi(1_B\otimes e_{11})\in C(X)\otimes D \otimes M_N(\C)$ for some integer $N\geq 1$.  It follows immediately that the projection $p$ has the desired properties. Conversely, if $B\cong p \left(C(X)\otimes D \otimes M_N(\C)\right)p$ then there is an isomorphism of continuous fields $B\otimes \K\cong C(X)\otimes D\otimes \K$ by \cite{paperL.G.Brown.Stable.Isom}.
We have thus shown that that $\theta([B]_{Br})=0$ iff and only if
$[B]_{Br}=0$.

We are now able to conclude that
$Br_D(X)$ is a group and that $\theta$ is injective by the general observation made earlier.
\end{proof}

 \begin{definition} \label{def:torsion} Let $D$ be  a strongly self-absorbing $C^*$-algebra.
 Let $A$ be
a locally trivial continuous field of $C^*$-algebras  with fiber $D \otimes \K$.  We say that $A$ is a \emph{torsion continuous field} if $A^{\otimes k}$ is isomorphic to a trivial field for some integer $k \geq 1$.
\end{definition}
\begin{corollary} \label{cor:torsion_delta}
Let $A$ be as in Theorem~\ref{thm:applications}.
Then $A$ is a torsion continuous field if and only if $\delta_0(A) \in H^1(X, K_0(D)^{\times}_+)$ is a torsion element and $\delta_k(A) = 0 \in H^{2k+1}(X,\Q)$ for all $k\geq 1$.
\end{corollary}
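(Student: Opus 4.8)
The plan is to deduce Corollary~\ref{cor:torsion_delta} by combining the equivalence of (1) and (4) in Theorem~\ref{thm:applications} with the description of torsion in $\bar{E}^1_D(X)$ from Theorem~\ref{thm:Brauer_Serre}(i), after separating the orientability obstruction $\delta_0$ from the higher rational classes $\delta_k$, $k\geq 1$. The key structural observation is that the map $\delta_0\colon E^1_D(X)\to H^1(X,K_0(D)^\times_+)$ measures exactly the failure of orientability, so that $A$ is orientable precisely when $\delta_0(A)=0$. Torsion continuous fields, being powers that trivialize, must in particular have trivializing powers that are orientable; this is what will force $\delta_0(A)$ to be a torsion element rather than merely to vanish.

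First I would prove the forward direction. Suppose $A$ is a torsion continuous field, so $A^{\otimes k}\cong C(X)\otimes D\otimes\K$ for some $k\geq 1$. Since the characteristic classes $\delta_j$ arise from natural transformations of abelian-group-valued functors (via the map $\delta$ of item~(5) and its lift $\delta_0$), they are additive under $\otimes$, i.e.\ $\delta_j(A^{\otimes k})=k\,\delta_j(A)$ for $j\geq 1$ and $\delta_0(A^{\otimes k})=\delta_0(A)^k$ multiplicatively in $H^1(X,K_0(D)^\times_+)$. A trivial field has all $\delta_j=0$ and $\delta_0=0$, so $k\,\delta_k(A)=0$ in the $\Q$-vector space $H^{2k+1}(X,\Q)$, forcing $\delta_k(A)=0$ for all $k\geq 1$; and $\delta_0(A)^k=0$ shows $\delta_0(A)$ is a torsion element of $H^1(X,K_0(D)^\times_+)$.

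For the converse, assume $\delta_0(A)$ is torsion and $\delta_k(A)=0$ for all $k\geq 1$. Let $N\geq 1$ be such that $N\delta_0(A)=0$ in the additive notation, i.e.\ $\delta_0(A)^N=0$; then $A^{\otimes N}$ is orientable, since its $\delta_0$ vanishes, and it still satisfies $\delta_k(A^{\otimes N})=N\delta_k(A)=0$ for all $k\geq 1$. Thus $A^{\otimes N}$ satisfies condition~(1) of Theorem~\ref{thm:applications} (using $\delta_0(A^{\otimes N})=0$ together with $\delta_k(A^{\otimes N})=0$ for $k\geq 1$). By the equivalence (1)~$\Leftrightarrow$~(4) of that theorem, $A^{\otimes N}$ is orientable with $(A^{\otimes N})^{\otimes m}\cong C(X)\otimes D\otimes\K$ for some $m\in\N$, whence $A^{\otimes Nm}$ is trivial and $A$ is a torsion continuous field.

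The main obstacle I anticipate is bookkeeping around the class $\delta_0$, which is group-valued rather than vector-space valued and lifts the vector-space-valued $\delta^s_0$ only up to the distinction recorded in item~(5); one must be careful that "torsion in $H^1(X,K_0(D)^\times_+)$" is the correct hypothesis and that killing it by passing to a power genuinely yields an orientable field, as guaranteed by the Proposition relating $\delta_0(A)=0$ to orientability. Once the additivity of the $\delta_j$ under tensor product and the clean split between the orientability obstruction and the rational classes are in hand, the corollary follows formally from Theorem~\ref{thm:applications}, and no further hard analysis is required.
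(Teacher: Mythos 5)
Your proposal is correct and follows essentially the same route as the paper: pass to a power $A^{\otimes N}$ that kills the torsion class $\delta_0(A)$, note the resulting field is orientable with vanishing $\delta_k$ for $k\geq 1$, and invoke the equivalence (1)~$\Leftrightarrow$~(4) of Theorem~\ref{thm:applications}. The paper's proof is just a terser version of this (it leaves the easy forward direction, via additivity of the classes under $\otimes_{C(X)}$ and the fact that $H^{2k+1}(X,\Q)$ is a $\Q$-vector space, implicit).
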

\begin{proof} Let $m\geq 1$ be an integer such that $m\delta_0(A) =0$. Then $\delta_0(A^{\otimes m})=0$.
We conclude by applying Theorem~\ref{thm:applications} to the orientable continuous field $A^{\otimes m}$.
\end{proof}

\section{Characteristic classes of the opposite continuous field}
Given a $C^*$-algebra $B$ denote by $B^{\rm op}$ the \emph{opposite $C^*$-algebra} with the same underlying Banach space and norm, but with multiplication given by $b^{\rm op} \cdot a^{\rm op} = (a \cdot b)^{\rm op}$. The \emph{conjugate $C^*$-algebra} $\overline{B}$ has the conjugate Banach space as its underlying vector space, but the same multiplicative structure. The map $a \mapsto a^*$ provides an isomorphism $B^{\rm op} \to \overline{B}$. Any automorphism $\alpha \in \Aut{B}$ yields in a canonical way automorphisms $\bar{\alpha} \colon \overline{B} \to \overline{B}$ and $\alpha^{\rm op} \colon B^{\rm op} \to B^{\rm op}$ compatible with $\ast \colon B^{\rm op} \to \overline{B}$. Therefore we have group isomorphisms $\theta \colon \Aut{B} \to \Aut{\overline{B}}$ and $\Aut{B} \to \Aut{B^{\rm op}}$. 
Note that $\alpha\in \Aut{B}$ is equal to $\theta(\alpha)$ when regarded as set-theoretic maps $B\to B$.
Given a locally trivial continuous field $A$ with fiber $B$, we can apply these operations fiberwise to obtain the locally trivial fields $A^{\rm op}$ and $\overline{A}$, which we will call the \emph{opposite} and the \emph{conjugate field}. They are isomorphic to each other and isomorphic to the conjugate and the opposite $C^*$-algebras of $A$.

A \emph{real form} of a complex C*-algebra $A$ is a real C*-algebra $A^{\R}$ such that $A\cong A^{\R}\otimes \C$.  A real form is not necessarily unique \cite{Boersema_and_comp:class} and not all C*-algebras admit real forms \cite{Phillips:opposite}.
If two C*-algebras $A$ and $B$ admit real forms  $A^{\R}$ and  $B^{\R}$, then $A^{\R}\otimes_{\R} B^{\R}$
is a real form of $A\otimes B$. 

\begin{example} All known strongly self-absorbing C*-algebras $D \in \mathcal{D}$ admit a real form.

Indeed, the real Cuntz algebras $\Cuntz{2}^{\R}$ and $\Cuntz{\infty}^{\R}$ are defined by the same
generators and relations as their complex versions. Alternatively $\Cuntz{\infty}^{\R}$ can be realized as follows.
Let $H_{\R}$ be a separable infinite dimensional real Hilbert space and let $\mathcal{F}^{\R}(H_{\R}) = \bigoplus_{n = 0}^{\infty} H_{\R}^{\otimes n}$ be the real Fock space associated to it. Every $\xi \in H_{\R}$ defines a shift operator $s_{\xi}(\eta) = \xi \otimes \eta$ and we denote the algebra spanned by the $s_{\xi}$ and their adjoints $s_{\xi}^*$ by $\Cuntz{\infty}^{\R}$. If $\mathcal{F}(H_{\R} \otimes \C)$ denotes the Fock space associated to the complex Hilbert space $H = H_{\R} \otimes \C$, then we have $\mathcal{F}^{\R} \otimes \C \cong \mathcal{F}(H)$. If we represent $\Cuntz{\infty}$ on $\mathcal{F}(H)$ using the above construction, then the map $s_{\xi} + i\,s_{\xi'} \mapsto s_{\xi + i\,\xi'}$ induces an isomorphism $\Cuntz{\infty}^{\R} \otimes \C \to \Cuntz{\infty}$. Likewise define $M_{\Q}^{\R}$ to be the infinite tensor product $M_2(\R) \otimes M_3(\R) \otimes M_4(\R) \otimes \dots$. Since $M_n(\C) \cong M_n(\R) \otimes \C$, we obtain an isomorphism $M_{\Q}^{\R} \otimes \C \cong M_{\Q}$ on the inductive limit. Let $\K^{\R}$ be the compact operators on $H_{\R}$ and $\K$ those on $H$, then we have $\K^{\R} \otimes \C \cong \K$. Thus, $M_{\Q} \otimes \Cuntz{\infty} \otimes \K$ is the complexification of the real $C^*$-algebra $M_{\Q}^{\R} \otimes \Cuntz{\infty}^{\R} \otimes \K^{\R}$.

The Jiang-Su  algebra $\ZZ$ admits a real form $\ZZ^{\R}$ which can be constructed in the same way as $\ZZ$.
Indeed, one constructs $\ZZ^{\R}$ as the inductive limit of a system 
\[ 
\cdots \to C([0,1], M_{p_nq_n}(\R)) \stackrel{\phi_{n}}{\longrightarrow} C([0,1], M_{p_{n+1}q_{n+1}}(\R))\to \cdots
\]
where the connecting maps $\phi_n$ are defined just as in the proof of \cite[Prop. 2.5]{Jiang_Su:paper}
with  only one modification. Specifically, one can choose the matrices $u_0$ and $u_1$ to be in the special orthogonal group 
$SO(p_nq_n)$ and this will ensure the existence of a continuous path $u_t$ in $O(p_nq_n)$ from $u_0$ to $u_1$ as required. 
\end{example}

If $B$ is the complexification of a real $C^*$-algebra $B^{\R}$, then a choice of isomorphism $B \cong B^{\R} \otimes \C$ provides an isomorphism $c \colon B \to \overline{B}$ via complex conjugation on $\C$. On automorphisms we have ${\rm Ad}_{c^{-1}} \colon \Aut{\overline{B}} \to \Aut{B}$. Let $\eta =  {\rm Ad}_{c^{-1}} \circ \theta \colon \Aut{B} \to \Aut{B}$. Now we specialize to the case $B =D \otimes \K$ with $D\in \mathcal{D}$ and study the effect of $\eta$ on homotopy groups, i.e.\ $\eta_* \colon \pi_{2k}(\Aut{B}) \to \pi_{2k}(\Aut{B})$. By \cite[Theorem 2.18]{paper:DadarlatP1} the groups $\pi_{2k+1}(\Aut{B})$ vanish.

Let $R$ be a commutative ring and denote by $\left[K^0(S^{2k}) \otimes R\right]^{\times}$  the group of units of the ring $K^0(S^{2k}) \otimes R$. Let $\left[K^0(S^{2k}) \otimes R\right]^{\times}_1$ be the kernel of the 
morphism of multiplicative groups $\left[K^0(S^{2k}) \otimes R\right]^{\times} \to R^{\times}.$ 
 This is the group of virtual rank $1$ vector bundles with coefficients in $R$ over $S^{2k}$. Let $c_S \colon K^0(S^{2k}) \to K^0(S^{2k})$ and $c_R \colon K_0(D) \to K_0(D)$ be the ring automorphisms induced by complex conjugation. 

\begin{lemma} \label{lem:KTheorySphere}
Let $D$ be a strongly self-absorbing $C^*$-algebra in the class $\mathcal{D}$, let $R = K_0(D)$ and let $k > 0$. There is an isomorphism $\pi_{2k}(\Aut{D \otimes \K}) \to \left[K^0(S^{2k}) \otimes R\right]^{\times}_1$ ($k>0$) such that the following diagram commutes
\[
	\xymatrix{
		\pi_{2k}(\Aut{D \otimes \K}) \ar[d] \ar[rr]^{\eta_*} && \pi_{2k}(\Aut{D \otimes \K}) \ar[d] \\
		\left[K^0(S^{2k}) \otimes R\right]^{\times}_1 \ar[rr]^{c_S \otimes c_R} && \left[K^0(S^{2k}) \otimes R\right]^{\times}_1
	}
\]
\end{lemma}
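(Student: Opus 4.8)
The plan is to prove the lemma in two stages: first construct the isomorphism $\pi_{2k}(\Aut{D\otimes\K}) \cong [K^0(S^{2k})\otimes R]^{\times}_1$ as the sphere-evaluation of a natural identification of $\uAut{D\otimes\K}$ with the units of the $K$-theory ring spectrum $KU^D$ with coefficients in $D$, and then check that the complex-conjugation symmetry $\eta$ corresponds under this identification to the conjugation involution $c_S\otimes c_R$.

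For the first stage I would start from $\pi_{2k}(\Aut{D\otimes\K}) = \pi_{2k}(\uAut{D\otimes\K}) = \bar{E}^0_D(S^{2k})$, which is valid because $\Aut{D\otimes\K}$ and $\uAut{D\otimes\K}$ share homotopy groups in positive degrees and because $\bar{E}^{*}_D$ is the reduced theory with $\Omega B\uAut{D\otimes\K}\simeq \uAut{D\otimes\K}$. By the identification of $\bar{E}^{*}_D$ with the reduced cohomology theory of the unit spectrum of $K$-theory with coefficients in $D$ \cite{paper:DadarlatP2}, the infinite loop space $\uAut{D\otimes\K}$ models the relevant cover of the space of units $GL_1(KU^D)$, with $\pi_*(KU^D)\cong K_*(D)$. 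The standard computation of the homotopy of a unit space then gives, for $2k\geq 2$,
\[
	\pi_{2k}(GL_1(KU^D)) \cong \{u\in (KU^D)^0(S^{2k})^{\times} : u|_{*}=1\}.
\]
Since $K_1(D)=0$ for every $D\in\mathcal{D}$ and $KU^1(S^{2k})=0$, the K\"unneth isomorphism collapses to $(KU^D)^0(S^{2k})\cong K^0(S^{2k})\otimes R$, and the augmentation $u\mapsto u|_{*}$ is exactly the morphism to $R^{\times}$ whose kernel defines $[K^0(S^{2k})\otimes R]^{\times}_1$. Because the whole identification comes from a natural transformation of functors in the space variable, it is natural enough to transport symmetries, which is what the second stage needs.

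For the second stage I would unwind $\eta = {\rm Ad}_{c^{-1}}\circ\theta$. The functor $\theta$ replaces $D\otimes\K$ by its conjugate and hence $KU^D$ by $\overline{KU^D}$; on $K$-theory this is the complex-conjugation operation, acting on $K^0(S^{2k})$ by the bundle-conjugation $c_S$, which is the identity on $H^0$ and multiplication by $(-1)^k$ on the reduced group $\widetilde{K}^0(S^{2k})\cong\Z$ (complex conjugation acts by $-1$ on $\widetilde{K}^0(S^2)$ and multiplicatively under Bott periodicity), and acting on the coefficients $R=K_0(D)$ by the conjugation $c_R$. The isomorphism $c$ coming from the chosen real form of $D$ complexifies to $D\otimes\K\cong\overline{D\otimes\K}$, so ${\rm Ad}_{c^{-1}}$ transports $\overline{KU^D}$ back to $KU^D$ via the complexification, inducing $c_R$ on $\pi_0$ and leaving the $S^{2k}$-direction acted on by $c_S$. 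Since complex conjugation on $K^0(S^{2k})\otimes K_0(D)$ splits as the tensor product of the conjugations on the two factors, the composite symmetry $\eta_*$ becomes exactly $c_S\otimes c_R$, which is the asserted commutativity.

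I expect the main obstacle to be this second stage: making rigorous that the abstractly defined $\eta$ — built from the conjugate C*-algebra and the real-form isomorphism — realizes precisely the complex-conjugation involution on $KU^D$ under the identification of the first stage, with no spurious twist, and that the sphere and coefficient conjugations decouple as $c_S\otimes c_R$. The cleanest way to control this is to exploit naturality in $D$ along real-form-compatible unital embeddings $D\to D\otimes M_{\Q}\otimes\Cuntz{\infty}$ and reduce the verification to the universal model $D=M_{\Q}\otimes\Cuntz{\infty}$, where $\eta_*$ and $c_S\otimes c_R$ can be compared directly on homotopy groups, in the spirit of Remark~\ref{rem:delta_suspension}; the fact that $\theta(\alpha)$ equals $\alpha$ as a set-theoretic map, noted before the lemma, is what guarantees that this comparison reduces to tracking the effect of $c$ on $K$-theory.
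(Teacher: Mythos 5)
Your first stage is sound and, despite the different packaging, is essentially the paper's: where you route the identification through the unit spectrum of \cite{paper:DadarlatP2}, the paper uses directly the concrete isomorphism from the proof of Theorem 2.22 of \cite{paper:DadarlatP1}, namely evaluation $\alpha \mapsto \alpha(1_D \otimes e)$ at a rank-one projection $e \in \K$, which gives $\pi_{2k}(\Aut{D\otimes\K}) = [S^{2k}, \uAut{D\otimes\K}] \cong K_0(C(S^{2k})\otimes D)^{\times}_1$, followed by exactly the K\"unneth collapse you describe (valid because $K_1(D)=0$).

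The genuine gap is in your second stage, and your proposed fix does not close it. Commutativity of the diagram is precisely the assertion that under the stage-one isomorphism $\eta_*$ becomes conjugation on $K$-theory; you defer this to a ``direct comparison on homotopy groups'' at the universal model $\QQ \otimes \Cuntz{\infty}$, but at that model the candidate actions on $\pi_{2k}\cong\Q$ (e.g.\ the identity and $(-1)^k$) cannot be distinguished by naturality or abstract homotopy-group considerations alone: one must still compute how the concrete involution $\eta = \mathrm{Ad}_{c^{-1}}\circ\theta$ acts, which is the same problem as for general $D$, so the reduction buys nothing (and it additionally requires real-form compatibility along $D \to D\otimes\QQ\otimes\Cuntz{\infty}$ and injectivity of the comparison on $\pi_{2k}$ --- both true since $K_0(D)\subset\Q$ is torsion-free, but extra overhead). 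The missing one-line mechanism, which works uniformly in $D$ and is the heart of the paper's proof, is this: choose $e$ so that $c(1_D \otimes e) = 1_D \otimes e$ (possible since $\K \cong \K^{\R}\otimes\C$); then $\eta(\alpha)(1\otimes e) = c^{-1}(\alpha(c(1\otimes e))) = c^{-1}(\alpha(1\otimes e))$, so the evaluation isomorphism \emph{literally} intertwines $\eta_*$ with $[p]\mapsto[c^{-1}(p)]$ on $K_0(C(S^{2k})\otimes D)^{\times}_1$, and the K\"unneth square then identifies this map with $c_S \otimes c_R$ on $\left[K^0(S^{2k})\otimes R\right]^{\times}_1$. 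Your closing remark that $\theta(\alpha)=\alpha$ as set-theoretic maps is indeed the germ of this computation, but without the $c$-fixed projection and the evaluation map the intertwining remains an assertion rather than a proof. (Your sign computation $c_S = (-1)^k$ on $\widetilde{K}^0(S^{2k})$ is correct but belongs to the paper's subsequent Remark~\ref{remark:conj}, not to the lemma itself.)
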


\begin{proof}
Observe that $\pi_{2k}(\Aut{D \otimes \K}) = \pi_{2k}(\uAut{D \otimes \K})$ (for $k>0$) and  $\uAut{D \otimes \K}$ is a path connected group, therefore $\pi_{2k}(\Aut{D \otimes \K}) = [S^{2k}, \uAut{D \otimes \K}]$. Let $e \in \K$ be a rank $1$ projection such that $c(1_D \otimes e) = 1_D \otimes e$. It follows from the proof of \cite[Theorem 2.22]{paper:DadarlatP1} that the map $\alpha \mapsto \alpha(1 \otimes e)$ induces an isomorphism $[S^{2k}, \uAut{D \otimes \K}] \to K_0(C(S^{2k}) \otimes D)^{\times}_1 = 1 + K_0(C_0(S^{2k} \setminus x_0) \otimes D)$. We have $\eta(\alpha)(1 \otimes e) = c^{-1}(\alpha ( c(1 \otimes e)))=c^{-1}(\alpha ( 1 \otimes e))$, i.e.\ the isomorphism intertwines $\eta$ and $c^{-1}$. Consider the following diagram of rings:
\[
	\xymatrix{
		K^0(S^{2k}) \otimes R \ar[d] \ar[rr]^{c_S \otimes c_R} && K^0(S^{2k}) \otimes R \ar[d] \\
		K_0(C(S^{2k}) \otimes D) \ar[rr]^{p \mapsto c^{-1}(p)} && K_0(C(S^{2k}) \otimes D)
	}
\]
The vertical maps arise from the K\"unneth theorem. Since $K_1(D) = 0$, these are isomorphisms. Since $c_S$ corresponds to the operation induced on $K_0(C(S^{2k}))$ by complex conjugation on $\K$, the above diagram commutes. 
\end{proof}

\begin{remark}\label{remark:conj} (i) If $D \in \mathcal{D}$  then $R=K_0(D) \subset \Q$ with $[1_D]=[1_{D^{\R}}]=1$. Thus  $c^{-1}(1_D) = 1_D$ and this shows that the above automorphism $c_R$ is trivial. 
The $K^0$-ring of the sphere is given by $K^0(S^{2k}) \cong \Z[X_k]/(X_k^2)$. The element $X_k$ is the $k$-fold reduced exterior tensor power of $H - 1$, where $H$ is the tautological line bundle over $S^2 \cong \C P^1$. Since $c_S$ maps $H-1$ to $1 - H$, it follows that $X_k$ is mapped to $-X_k$ if $k$ is odd and to $X_k$ if $k$ is even. We have $\left[K^0(S^{2}) \otimes R\right]^{\times}_1 = \{ 1 + t\,X_k\ |\ t \in R \} \subset R[X_k]/(X_k^2)$. Thus, $c_S$ maps $1 + t\,X_k$ to its inverse $1 - t\,X_k$ if $k$ is odd and acts trivially if $k$ is even.

(ii) By \cite[Theorem 2.18]{paper:DadarlatP1} there is an isomorphism $\pi_0(\Aut{D\otimes \K})\cong K_0(D)^{\times}_+$ given by $[\alpha]\mapsto [\alpha(1\otimes e)]$. Arguing as in Lemma~\ref{lem:KTheorySphere} we see that the action of $\eta$ on this groups is given by $c_R=\mathrm{id}$.
\end{remark}
\begin{theorem} \label{thm:char_and_opp}
Let $X$ be a compact metrizable space and let $A$ be a locally trivial continuous field with fiber $D \otimes \K$ for a strongly self-absorbing $C^*$-algebra $D\in \mathcal{D}$. Then we have for $k\geq  0$:
\[
	\delta_k(A^{\rm op}) = \delta_k(\overline{A}) = (-1)^k\,\delta_k(A) \in H^{2k+1}(X, \Q)\ .
\]
\end{theorem}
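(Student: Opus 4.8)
The first equality $\delta_k(A^{\rm op}) = \delta_k(\overline{A})$ is immediate, since $A^{\rm op}$ and $\overline{A}$ are isomorphic as locally trivial fields, as noted at the beginning of this section. The whole content therefore lies in the identity $\delta_k(\overline{A}) = (-1)^k \delta_k(A)$, and the plan is to reduce it to a computation on suspensions, where the explicit formula of Remark~\ref{rem:delta_suspension} is available. Fix the real form of $D$ guaranteed by the Example, let $c \colon D \otimes \K \to \overline{D \otimes \K}$ be the associated conjugation and $\eta = {\rm Ad}_{c^{-1}} \circ \theta$. Conjugating a field fiberwise and identifying the conjugate fibers back to $D \otimes \K$ via $c$ has the effect of postcomposing the transition data of $A$ with $\eta$; thus if $f_A$ classifies $A$, then $\eta \circ f_A$ classifies $\overline{A}$. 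Both $A \mapsto \delta_k(\overline{A})$ and $A \mapsto (-1)^k \delta_k(A)$ are natural transformations of cohomology theories $\bar{E}^1_D(-) \to H^{2k+1}(-,\Q)$, so, exactly as in the last sentence of Remark~\ref{rem:delta_suspension}, it suffices to check their equality on suspensions $X = \Sigma Y$, the coefficients of both theories being rational vector spaces.

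So suppose $X = \Sigma Y$ and let $f_A \colon Y \to \uAut{D \otimes \K}$ be the transition map, so that $\delta_k(A) = \log {\rm ch}(f_A) \in H^{2k}(Y,\Q)$ by Remark~\ref{rem:delta_suspension}. The conjugate field is classified by $\eta \circ f_A$. Under the identification of Lemma~\ref{lem:KTheorySphere}, the endomorphism $\eta$ corresponds to $c_S \otimes c_R$, where by Remark~\ref{remark:conj}(i) one has $c_R = \mathrm{id}$ and $c_S$ is complex conjugation on $K^0$. Transporting this to the Chern character, postcomposition with $\eta$ sends ${\rm ch}(f_A)$ to ${\rm ch}(\overline{f_A})$, which is the image of ${\rm ch}(f_A)$ under the grading-sign ring automorphism $\phi$ of $H^{\rm ev}(Y,\Q)$ that multiplies $H^{2j}(Y,\Q)$ by $(-1)^j$; this is the standard fact ${\rm ch}_j(\overline{E}) = (-1)^j\,{\rm ch}_j(E)$ combined with $c_R = \mathrm{id}$.

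Since $\phi$ is a ring automorphism and the rational logarithm of Rezk is natural with respect to ring homomorphisms, we get $\log {\rm ch}(\overline{f_A}) = \log \phi({\rm ch}(f_A)) = \phi(\log {\rm ch}(f_A))$. Taking the component in $H^{2k}(Y,\Q) \cong H^{2k+1}(X,\Q)$ and using that $\phi$ acts there by $(-1)^k$ yields $\delta_k(\overline{A}) = (-1)^k \delta_k(A)$ on suspensions, and hence in general.

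The main obstacle I anticipate is the bookkeeping around $\eta$: one must verify that the conjugate field is indeed classified by $\eta \circ f_A$ independently of the chosen real form, and that the identification of Lemma~\ref{lem:KTheorySphere} is compatible with the Chern character so that $\eta$ genuinely acts by the grading-sign automorphism $\phi$ at the level of ${\rm ch}$. The reduction to suspensions also deserves care: it rests on both $\delta_k$ and the conjugation operation being \emph{stable} natural transformations of the relevant cohomology theories, which in turn uses that conjugation respects the tensor (infinite loop) structure underlying $\bar{E}^*_D$. Once these compatibilities are in place, the remaining steps — that $\phi$ is a ring automorphism and that $\log$ is natural — are routine.
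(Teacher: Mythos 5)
Your strategy is essentially the paper's in its key ingredients: both arguments rest on the automorphism $\eta = {\rm Ad}_{c^{-1}} \circ \theta$ of $\Aut{D \otimes \K}$ built from a real form, on the observation that $\overline{A}$ is classified by $\eta \circ f_A$, and on reducing the sign computation to coefficient groups using rationality of the target, where Lemma~\ref{lem:KTheorySphere} and Remark~\ref{remark:conj}(i) produce the factor $(-1)^k$. Where you genuinely diverge is the execution: the paper never passes to suspensions or to $\log {\rm ch}$. Instead it uses the unital inclusion $D^{\R} \to D^{\R} \otimes \Cuntz{\infty}^{\R} \otimes M_{\mathbb{Q}}^{\R}$ to get a square intertwining the two copies of $\eta$, pushes $[A]$ forward to $E^1_{M_{\mathbb{Q}} \otimes \Cuntz{\infty}}(X) \cong H^1(X,\Q^{\times}) \oplus \bigoplus_{k \geq 1} H^{2k+1}(X,\Q)$, and reads off the action of $\eta_*$ summand by summand from its effect on homotopy groups, using that $B\eta$ is an infinite loop map. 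The compatibility you flag as an obstacle (that conjugation respects the infinite loop structure) is asserted, with a reference to \cite[Section 3]{paper:DadarlatP1}, rather than reproved there, so you are not behind the paper on that point; your route through Remark~\ref{rem:delta_suspension} and the identity ${\rm ch}_j(\overline{E}) = (-1)^j {\rm ch}_j(E)$ is a viable alternative for the rational classes.

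However, as written your proof does not cover the full statement, in two concrete respects. First, the theorem claims the identity for all $k \geq 0$, but your entire mechanism (rational coefficients, the suspension formula, $\log {\rm ch}$) only applies to $k \geq 1$: the class $\delta_0$ takes values in $H^1(X, K_0(D)^{\times}_+)$ (resp.\ $\delta_0^s$ in $H^1(X,\Q^{\times})$), whose coefficient group is not a rational vector space, so the reduction ``equal on coefficients implies equal'' fails there, and Remark~\ref{rem:delta_suspension} is silent about it since it concerns orientable fields, for which $\delta_0 = 0$. The paper disposes of $k = 0$ separately via Remark~\ref{remark:conj}(ii): $\eta$ acts as $c_R = \mathrm{id}$ on $\pi_0(\Aut{D \otimes \K}) \cong K_0(D)^{\times}_+$, hence trivially on $H^1(X, \Q^{\times})$. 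Second, you formulate your two natural transformations on $\bar{E}^1_D(-)$, i.e.\ for orientable fields only, whereas the theorem assumes nothing about orientability and $\delta_k$ is defined on all of $E^1_D(X)$; your closing ``and hence in general'' therefore only reaches classes in $\bar{E}^1_D(X)$. This gap is repairable along your own lines: both transformations are defined on the unreduced theory, the target $H^{2k+1}(-,\Q)$ is rational, and for $k \geq 1$ the only nontrivial coefficient check occurs on $S^{2k+1}$, where $H^1(S^{2k+1}, K_0(D)^{\times}_+) = 0$ forces every class to be orientable, so your suspension computation applies --- but this step has to be stated, and it is exactly what the paper's diagrammatic argument handles uniformly. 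A minor related slip: your phrase ``the coefficients of both theories being rational vector spaces'' is inaccurate (the source $\bar{E}^*_D$ does not have rational coefficients in general, e.g.\ for $D = \C$); what the argument actually needs, and what the last sentence of Remark~\ref{rem:delta_suspension} uses, is rationality of the target.
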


\begin{proof} \label{pf:char_and_opp}
Let $D^{\R}$ be a real form of $D$. 
The group isomorphism $\eta \colon \Aut{D\otimes \K} \to \Aut{D\otimes \K}$ induces an infinite loop map $B\eta \colon B\Aut{D\otimes \K} \to B\Aut{D\otimes \K}$, where the infinite loop space structure is the one described in \cite[Section 3]{paper:DadarlatP1}. 
 If $f \colon X \to B\Aut{D\otimes \K}$ is the classifying map of a locally trivial field $A$, then $B\eta \circ f$ classifies $\overline{A}$. Thus the induced map $\eta_*:E_{D}^1(X)\to E_{D}^1(X)$ has the property that
 $\eta_*[A]=[\overline{A}]$.

The unital inclusion $D^{\R}\to B^{\R}:=D^{\R} \otimes \Cuntz{\infty}^{\R}\otimes M_{\Q}^{\R}$ induces a commutative diagram
\[
	\xymatrix{
		\Aut{D\otimes \K}\ar[r]^{\eta} \ar[d] & \Aut{D\otimes \K}\ar[d]\\
		\Aut{B\otimes \K}\ar[r]^{\eta} & \Aut{B\otimes \K}
	}
\]
with $B:=B^{\R}\otimes \C$. 
From this we obtain a commutative  diagram
\[
	\xymatrix{
		E^1_D(X)\ar[r]^{\eta_*} \ar[d]_{\delta} & E^1_D(X)\ar[d]^{\delta}\\
		E^1_B(X)\ar[r]^{\eta_*} & E^1_B(X)
	}
\]
As explained earlier, $B\cong M_{\Q} \otimes \Cuntz{\infty}$.
Recall that
 $E^1_{M_{\mathbb{Q}}\otimes \mathcal{O}_\infty}(X)\cong H^1(X,\mathbb{Q}^\times)\oplus \bigoplus_{k \geq 1} H^{2k+1}(X,\mathbb{Q})$.
 By Lemma \ref{lem:KTheorySphere} and Remark~\ref{remark:conj}(i) the effect of $\eta$ on $H^{2k+1}(X, \pi_{2k}(\Aut{B})) \cong H^{2k+1}(X, \Q)$ is given by multiplication with $(-1)^k$ for $k>0$.
 By Remark~\ref{remark:conj}(ii) $\eta$ acts trivially on $H^1(X,\pi_0(\Aut{B}))=H^1(X,\Q^{\times})$.
\end{proof}
\begin{example}\label{remark:brauer_op}
Let $\ZZ$ be the Jiang-Su algebra. We will show that in general the inverse of an element  in the Brauer group $Br_{\ZZ}(X)$ is not represented  by the class of the opposite algebra.
Let $Y$ be the space obtained by attaching a disk to a circle by a degree three map and let $X_n=S^n\wedge Y$
be $n^{th}$ reduced suspension of $Y$. Then $E_{\ZZ}^1(X_3)\cong K^0(X_2)^{\times}_{+}\cong 1+\widetilde{K}^0(X_2)$
by \cite[Thm.2.22]{paper:DadarlatP1}. Since this is a torsion group, $Br_{\ZZ}(X_3)\cong E_{\ZZ}^1(X_3)$
by Theorem~\ref{thm:Brauer_Serre}. Using the K\"unneth formula, 
$Br_{\ZZ}(X_3)\cong 1+\widetilde{K}^0(S^2)\otimes \widetilde{K}^0(Y)\cong 1+ \Z/3$.
Reasoning as in Lemma~\ref{lem:KTheorySphere} with $X_2$ in place of $S^{2k}$, we identify the map $\eta_*:E_{\ZZ}^1(X_3) \to E_{\ZZ}^1(X_3)$ with the map $K^0(X_2)^{\times}_{+}\to K^0(X_2)^{\times}_{+}$ that sends the class $x=[V_1]-[V_2]$ to   $\overline{x}=[\overline{V}_1]-[\overline{V}_2]$, where 
$\overline{V}_i$ is the complex conjugate bundle of $V_i$. If $V$ is complex vector bundle, and $c_1$ is the first Chern class, $c_1(\overline{V})=-c_1(V)$ by \cite[p.206]{book:Karoubi}. Since conjugation is compatible with
the K\"unneth formula, we deduce  that $x= \overline{x}$ for $x\in  K^0(X_2)^{\times}_{+}$.
Indeed, if 
 $\beta\in \widetilde{K}^0(S^2)$, $y\in\widetilde{K}^0(Y)$ and $x=1+\beta y$, then $\overline{x}=1+(-\beta)(-y)=x$.
 Let $A$ be a continuous field over $X_3$ with fibers $M_N(\ZZ)$ such that
$[A]_{Br}=1+\beta y$ in $Br_{\ZZ}(X_3)\cong 1+\widetilde{K}^0(S^2)\otimes \widetilde{K}^0(Y)\cong 1+ \Z/3$, where $\beta$ a generator of $\widetilde{K}^0(S^2)$ and $y$ is a generator of  $\widetilde{K}^0(Y)$. 
Then $[\overline{A}]_{Br}=1+(-\beta)(-y)=[A]_{Br}$ and hence 
$$[\overline{A}\otimes_{C(X_3)} A]_{Br}=(1+\beta y)^2=1+2\beta y\neq 1.$$
\end{example}
\begin{corollary} \label{lem:A_Aop_torsion}
Let $X$ be a compact metrizable space and let $A$ be a  locally trivial continuous field with fiber $D \otimes \K$ with $D$ in the class $\mathcal{D}$. If $H^{4k + 1}(X, \Q) =  0$ for all $k\geq 0$, then there is an $N \in \N$ such that 
\[
	(A \otimes_{C(X)} A^{\rm op})^{\otimes N} \cong C(X, D \otimes \K)\ .
\]
\end{corollary}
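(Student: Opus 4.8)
The plan is to show that $B := A \otimes_{C(X)} A^{\rm op}$ is a \emph{torsion continuous field} in the sense of Definition~\ref{def:torsion}; the asserted isomorphism is precisely the statement that $B^{\otimes N}$ is trivial for some $N$. Before invoking the characteristic classes I would check that $B$ is itself a locally trivial field with fiber $D \otimes \K$. Fiberwise $A^{\rm op} \cong \overline{A}$, so the fiber of $A^{\rm op}$ is $\overline{D \otimes \K} \cong \overline{D} \otimes \overline{\K}$. Now $\overline{\K} \cong \K$, and by the Example above every $D \in \mathcal{D}$ admits a real form $D^{\R}$ with $D \cong D^{\R} \otimes_{\R} \C$, so complex conjugation on the $\C$-factor gives $\overline{D} \cong D$. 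Hence $A^{\rm op}$ has fiber $D \otimes \K$, and $B$ has fiber $(D \otimes \K) \otimes (D \otimes \K) \cong D \otimes \K$.

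The key computation is that of the rational characteristic classes of $B$. Each $\delta_k$ is additive with respect to the tensor product, which is the group law on $E^1_D(X)$, so for $k \geq 1$ Theorem~\ref{thm:char_and_opp} gives
\[
	\delta_k(B) = \delta_k(A) + \delta_k(A^{\rm op}) = \bigl(1 + (-1)^k\bigr)\,\delta_k(A) \in H^{2k+1}(X,\Q).
\]
For odd $k$ the factor $1 + (-1)^k$ vanishes, while for even $k = 2j \geq 2$ the class lies in $H^{2k+1}(X,\Q) = H^{4j+1}(X,\Q)$, which is $0$ by hypothesis. Hence $\delta_k(B) = 0$ for every $k \geq 1$, and only the class $\delta_0(B) \in H^1(X, K_0(D)^{\times}_+)$ remains to be controlled.

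To handle $\delta_0$ I would prove that the coefficient group $H^1(X, K_0(D)^{\times}_+)$ is itself torsion. By Lemma~\ref{lemma:corners} the multiplicative group $K_0(D)^{\times}_+$ is a countable subgroup of $\Q^{\times}$, hence of the form $T \oplus F$ with $T$ of order at most $2$ and $F$ free abelian; since $H^1(X,T)$ is $2$-torsion it suffices to treat $F$. From the exact sequence $0 \to F \to F \otimes \Q \to F \otimes (\Q/\Z) \to 0$ and the vanishing $H^1(X, F \otimes \Q) = \varinjlim H^1(X, \Q^n) = 0$ — which is exactly the case $k = 0$ of the hypothesis — one sees that $H^1(X, F)$ is a quotient of $H^0(X, F \otimes (\Q/\Z))$. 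A locally constant function on the compact space $X$ with values in the torsion group $F \otimes (\Q/\Z)$ takes only finitely many values and so has finite order; thus $H^0(X, F \otimes (\Q/\Z))$, and with it $\delta_0(B)$, is torsion. Letting $m$ be the order of $\delta_0(B)$, we get $\delta_0(B^{\otimes m}) = m\,\delta_0(B) = 0$, so $B^{\otimes m}$ is orientable, while $\delta_k(B^{\otimes m}) = m\,\delta_k(B) = 0$ for all $k \geq 1$. By Theorem~\ref{thm:Brauer_Serre}(i), $[B^{\otimes m}] \in \Tor{\bar{E}^1_D(X)}$, so $B^{\otimes mN'} = (B^{\otimes m})^{\otimes N'}$ is trivial for some $N'$, and $N = m N'$ completes the proof.

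The main obstacle is this last step, namely showing that $H^1(X, K_0(D)^{\times}_+)$ is a torsion group: this is where the instance $H^1(X, \Q) = 0$ (the case $k = 0$) of the hypothesis enters, together with the compactness of $X$ — needed to force locally constant $\Q/\Z$-valued functions to have finite order — and the explicit determination of $K_0(D)^{\times}_+$ in Lemma~\ref{lemma:corners}. Everything else is formal: once $B$ is seen to have fiber $D \otimes \K$, the vanishing of $\delta_k(B)$ for $k \geq 1$ follows from Theorem~\ref{thm:char_and_opp} by the parity of $2k+1$, and the passage from vanishing characteristic classes to triviality of a tensor power is Theorem~\ref{thm:Brauer_Serre}(i).
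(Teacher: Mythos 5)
Your proposal is correct, and its skeleton coincides with the paper's own proof: the odd classes cancel by Theorem~\ref{thm:char_and_opp} together with additivity of $\delta_k$ under the tensor product, the even classes with $k\geq 1$ land in $H^{4k+1}(X,\Q)=0$, and the conclusion comes from the torsion criterion (the paper cites Corollary~\ref{cor:torsion_delta}; your final step --- pass to $B^{\otimes m}$ with $m$ the order of $\delta_0(B)$ and invoke Theorem~\ref{thm:Brauer_Serre}(i) --- is exactly the proof of that corollary repeated inline). Where you genuinely add content is at $k=0$: the paper simply asserts $\delta_{2k}(A\otimes_{C(X)}A^{\rm op})=0$ for all $k\geq 0$, sweeping the class $\delta_0\in H^1(X,K_0(D)^{\times}_{+})$ --- which does not live in rational cohomology --- into the same sentence without justification, whereas you supply the missing argument that $H^1(X,K_0(D)^{\times}_{+})$ is torsion when $H^1(X,\Q)=0$: the decomposition $K_0(D)^{\times}_{+}\cong T\oplus F$ with $T$ of order at most $2$ and $F$ free abelian (available from the explicit list in Lemma~\ref{lemma:corners}, or from the subgroup structure of $\Q^{\times}$), the coefficient sequence $0\to F\to F\otimes\Q\to F\otimes(\Q/\Z)\to 0$, continuity of \v{C}ech cohomology on compact spaces to get $H^1(X,F\otimes\Q)=0$ from the $k=0$ instance of the hypothesis, and compactness of $X$ to see that $H^0(X,F\otimes(\Q/\Z))$ is torsion. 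This argument is sound, and in fact slightly more is true: $H^1(X,F)$ is also torsion-free (a direct limit of groups $H^1(X_i,\Z)\otimes F$ over finite complexes), hence zero, so $\delta_0(A\otimes_{C(X)}A^{\rm op})=2\,\delta_0(A)$ actually vanishes --- which vindicates the paper's literal claim for $k=0$. Your preliminary verification that $A^{\rm op}$, and hence $A\otimes_{C(X)}A^{\rm op}$, is a locally trivial field with fiber $D\otimes\K$ (via the real form giving $\overline{D}\cong D$) is likewise only implicit in the paper, through the setup of Theorem~\ref{thm:char_and_opp}. In short: same route as the paper, with the one nontrivial glossed step honestly filled in.
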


\begin{proof} \label{pf:A_Aop_torsion}
If $H^{4k+1}(X,\Q) = 0$, then $\delta_{2k}(A \otimes_{C(X)} A^{\rm op}) = 0$ for all $k\geq 0$. Moreover,
\newline $\delta_{2k+1}(A \otimes_{C(X)} A^{\rm op}) = \delta_{2k+1}(A) - \delta_{2k+1}(A) = 0$. The statement follows from Corollary \ref{cor:torsion_delta}.
\end{proof}

\renewcommand*{\bibfont}{\footnotesize}
\bibliographystyle{plain}

\end{document}